\newcommand{\GL}{{G^\mathcal{L}}}
\newcommand{\GR}{{G^\mathcal{R}}}
\newcommand{\N}{\mathcal{N}}
\renewcommand{\P}{\mathcal{P}}
\renewcommand{\L}{\mathcal{L}}
\newcommand{\R}{\mathcal{R}}
\newtheorem{theorem}{Theorem}
\newtheorem{lemma}[theorem]{Lemma}
\newtheorem{definition}[theorem]{Definition}
\newtheorem{observation}[theorem]{Observation}
\newcommand{\custombrace}[2]{
  \left\{
  \begin{array}{@{}c@{}}
    \rule{0pt}{#1}\\
    #2
  \end{array}
  \right.
}
\newcommand{\custombracee}[2]{%
  \left\}
  \begin{array}{@{}c@{}}
    \rule{0pt}{#1}\\
    #2
  \end{array}
  \right.
}
\newcommand{\customslash}[2]{%
  \makebox[0pt][l]{\rule[-#1]{#2}{#1}}|
}
\title{A complete solution for the\\ {\sc partisan chocolate game}}
\begin{document}


\author{Tomoaki Abuku\thanks{buku3416@gmail.com, National Institute of Informatics} \and Hikaru Manabe\thanks{urakihebanam@gmail.com, Keimei Gakuin Junior and High School} \thanks{Hikaru Manabe's work is funded by Kadokawa Dwango Educational Institute, Academic Research Club.} \and Richard J. Nowakowski \thanks{r.nowakowski@dal.ca, Dalhousie University} \and 
Carlos P. Santos \thanks{cmf.santos@fct.unl.pt, Center for Mathematics and Applications (NovaMath), FCT NOVA} \thanks{Carlos Santos' work is funded by national funds through the FCT - Funda\c{c}\~{a}o para a Ci\^{e}ncia e a Tecnologia, I.P., under the scope of the projects UIDB/00297/2020 and UIDP/00297/2020 (Center for Mathematics and \linebreak Applications).} \and
Koki Suetsugu\thanks{suetsugu.koki@gmail.com, National Institute of Informatics} \\
}




\begin{abstract}
The class of Poset Take-Away games includes many interesting and difficult games. Playing on an $n$-dimensional positive quadrant (the origin being the bottom of the poset) gives rise to {\sc nim, wythoff’s nim} and {\sc chomp}. These are impartial games. We introduce a partisan game motivated by {\sc chomp} and the recent chocolate-bar version. Our game is played on a chocolate bar with alternately flavored pieces (or a checkerboard). We solve this game by showing it is equivalent to {\sc blue-red hackenbush} strings. This equivalence proves that the values of game are numbers and it gives an algorithm for optimal play when there is more than one chocolate bar. The checkerboard interpretation leads to many natural questions.
\end{abstract}

Combinatorial Game Theory, chocolate-bar games, Jacobsthal Sequence.

\vspace{0.4cm}
MSC 2020 91A46, 91A05, 91A80


\maketitle

\section{Introduction}
\label{sec:intro}
We assume that the reader is acquainted with the basic concepts of short two-person perfect information combinatorial games (Combinatorial Game Theory, CGT) as presented in any of \cite{ANW007,BCG82,Con76,Sie013}. We only consider normal play, i.e. the player who cannot move loses. Indeed, the background required to follow the proofs presented in this document is minimal and we have opted to include a small subsection in the final part of this introduction. This way, the paper becomes fully self-contained. Readers fluent in CGT may wish to skip that subsection.

Chocolate-bar formulations are common in CGT. A notable example is the ruleset {\sc chomp}, played on a rectangular grid consisting of smaller square cells, which can be thought of as the blocks of a chocolate bar. The\linebreak chocolate-bar formulation of {\sc chomp} is attributed to David Gale \cite{Gale74}, but an equivalent ruleset, expressed in terms of choosing divisors of a fixed integer, was published earlier by Frederik Schuh \cite{Sch52}. This classical ruleset still abounds with interesting open problems.

Recently, new chocolate-bar formulations have been introduced in \cite{Rob89, MN015, MNN020, MN021}. In the impartial, two-dimensional version, there is a poisoned square in the bottom-left corner. A legal move consists of breaking the bar along a horizontal or vertical line, and eating the portion that does not contain the poisoned square\footnote{Naturally, players do not want to die.}. Here, we introduce a natural two-dimensional \emph{partisan} version, the {\sc partisan chocolate game}, and provide a comprehensive solution for it, expressed in Theorem \ref{thm:main}. This constitutes the main result of this document.

The rules of partisan chocolate game are the following: (1) a component consists of a rectangular grid (a chocolate bar) composed of smaller square cells which, except for the bottom-left, are colored in a checkerboard fashion of blue (blueberry) and red (strawberry) squares. The bottom-left is poisoned and colored black, and the squares adjacent to it orthogonally are blue. (2) Left\footnote{Traditional female and positive player in CGT.} can cut the chocolate along a vertical line, provided that the top square of the column immediately to the right of that line is blue. Left can also cut the chocolate along a horizontal line, as long as the rightmost square in the row just above that line is blue. Right\footnote{Traditional male and negative player in CGT.} can make the same type of moves, but provided that the top square of the column immediately to the right of the line or the rightmost square in the row just above the line is red. After making a cut, a player eats the portion of the chocolate that does not contain the poisoned square.

We may assume that the two squares adjacent to the poisoned square are blue, since the case where they are red is the negative. It should be noted that the game can be played with multiple chocolate bars simultaneously. In this case, traditional disjunctive sum rules apply, where a player, on their turn, chooses one of the chocolate bars and makes
their move there, leaving all other components unchanged. An $n \times m$ chocolate bar will be denoted as $(n-1, m-1)$, where $n$ is the number of columns, and $m$ is the number of rows. The leftmost column and the bottommost row are indexed as zero. Figure \ref{fig:fig1} shows the $(2,3)$-position, along with the options for Left and Right (after they have made their moves and eaten the delicious portion).

\begin{figure}[htbp]
\begin{center}
\scalebox{0.47}{
\begin{tikzpicture}
\clip(0.5,0.5) rectangle (29.5,5.5);

\draw [line width=2.pt] (1.,1.)-- (1.,5.);
\draw [line width=2.pt] (1.,1.)-- (4.,1.);
\draw [line width=2.pt] (4.,1.)-- (4.,5.);
\draw [line width=2.pt] (1.,5.)-- (4.,5.);
\draw [line width=2.pt] (2.,1.)-- (2.,5.);
\draw [line width=2.pt] (3.,1.)-- (3.,5.);
\draw [line width=2.pt] (1.,4.)-- (4.,4.);
\draw [line width=2.pt] (1.,3.)-- (4.,3.);
\draw [line width=2.pt] (1.,2.)-- (4.,2.);
\fill[line width=0.pt,fill=black,fill opacity=1.0] (2.,1.) -- (2.,2.) -- (1.,2.) -- (1.,1.) -- cycle;

\draw [line width=2.pt] (8.,1.)-- (8.,5.);
\draw [line width=2.pt] (9.,1.)-- (9.,5.);
\draw [line width=2.pt] (10.,1.)-- (10.,5.);
\draw [line width=2.pt] (10.,5.)-- (8.,5.);
\draw [line width=2.pt] (10.,4.)-- (8.,4.);
\draw [line width=2.pt] (10.,3.)-- (8.,3.);
\draw [line width=2.pt] (8.,2.)-- (10.,2.);
\draw [line width=2.pt] (8.,1.)-- (10.,1.);
\draw (8.1,2.9) node[anchor=north west] {\scalebox{1.5}{\textcolor[rgb]{0.00,0.07,1.00}{\textbf{B}}}};
\draw (8.1,2.9+2) node[anchor=north west] {\scalebox{1.5}{\textcolor[rgb]{0.00,0.07,1.00}{\textbf{B}}}};
\draw (8.1+1,2.9+1) node[anchor=north west] {\scalebox{1.5}{\textcolor[rgb]{0.00,0.07,1.00}{\textbf{B}}}};
\draw (8.1+1,2.9-1) node[anchor=north west] {\scalebox{1.5}{\textcolor[rgb]{0.00,0.07,1.00}{\textbf{B}}}};
\draw (8.1+1,2.9) node[anchor=north west] {\scalebox{1.5}{\textcolor[rgb]{1.00,0.00,0.00}{\textbf{R}}}};
\draw (8.1,2.9+1) node[anchor=north west] {\scalebox{1.5}{\textcolor[rgb]{1.00,0.00,0.00}{\textbf{R}}}};
\draw (8.1+1,2.9+2) node[anchor=north west] {\scalebox{1.5}{\textcolor[rgb]{1.00,0.00,0.00}{\textbf{R}}}};
\fill[line width=0.pt,fill=black,fill opacity=1.0] (9.,1.) -- (9.,2.) -- (8.,2.) -- (8.,1.) -- cycle;

\draw [line width=2.pt] (12.,3.)-- (15.,3.);
\draw [line width=2.pt] (12.,4.)-- (12.,1.);
\draw [line width=2.pt] (12.,1.)-- (15.,1.);
\draw [line width=2.pt] (14.,1.)-- (14.,4.);
\draw [line width=2.pt] (13.,4.)-- (13.,1.);
\draw [line width=2.pt] (15.,4.)-- (15.,1.);
\draw [line width=2.pt] (15.,2.)-- (12.,2.);
\draw [line width=2.pt] (12.,4.)-- (15.,4.);
\draw (8.1+5,2.9-1) node[anchor=north west] {\scalebox{1.5}{\textcolor[rgb]{0.00,0.07,1.00}{\textbf{B}}}};
\draw (8.1+4,2.9) node[anchor=north west] {\scalebox{1.5}{\textcolor[rgb]{0.00,0.07,1.00}{\textbf{B}}}};
\draw (8.1+6,2.9) node[anchor=north west] {\scalebox{1.5}{\textcolor[rgb]{0.00,0.07,1.00}{\textbf{B}}}};
\draw (8.1+5,2.9+1) node[anchor=north west] {\scalebox{1.5}{\textcolor[rgb]{0.00,0.07,1.00}{\textbf{B}}}};
\draw (8.1+5,2.9) node[anchor=north west] {\scalebox{1.5}{\textcolor[rgb]{1.00,0.00,0.00}{\textbf{R}}}};
\draw (8.1+6,2.9-1) node[anchor=north west] {\scalebox{1.5}{\textcolor[rgb]{1.00,0.00,0.00}{\textbf{R}}}};
\draw (8.1+6,2.9+1) node[anchor=north west] {\scalebox{1.5}{\textcolor[rgb]{1.00,0.00,0.00}{\textbf{R}}}};
\draw (8.1+4,2.9+1) node[anchor=north west] {\scalebox{1.5}{\textcolor[rgb]{1.00,0.00,0.00}{\textbf{R}}}};
\fill[line width=0.pt,fill=black,fill opacity=1.0] (13.,1.) -- (13.,2.) -- (12.,2.) -- (12.,1.) -- cycle;

\draw [line width=2.pt] (25.-7.5,2.)-- (28.-7.5,2.);
\draw [line width=2.pt] (25.-7.5,1.)-- (28.-7.5,1.);
\draw [line width=2.pt] (25.-7.5,1.)-- (25.-7.5,2.);
\draw [line width=2.pt] (26.-7.5,1.)-- (26.-7.5,2.);
\draw [line width=2.pt] (27.-7.5,1.)-- (27.-7.5,2.);
\draw [line width=2.pt] (28.-7.5,1.)-- (28.-7.5,2.);
\draw (8.1+18-7.5,2.9-1) node[anchor=north west] {\scalebox{1.5}{\textcolor[rgb]{0.00,0.07,1.00}{\textbf{B}}}};
\draw (8.1+19-7.5,2.9-1) node[anchor=north west] {\scalebox{1.5}{\textcolor[rgb]{1.00,0.00,0.00}{\textbf{R}}}};
\fill[line width=0.pt,fill=black,fill opacity=1.0] (26.-7.5,1.) -- (26.-7.5,2.) -- (25.-7.5,2.) -- (25.-7.5,1.) -- cycle;

\draw [line width=2.pt] (18.+5.5,1.)-- (18.+5.5,5.);
\draw [line width=2.pt] (18.+5.5,4.)-- (17.+5.5,4.);
\draw [line width=2.pt] (18.+5.5,3.)-- (17.+5.5,3.);
\draw [line width=2.pt] (17.+5.5,1.)-- (17.+5.5,5.);
\draw [line width=2.pt] (17.+5.5,5.)-- (18.+5.5,5.);
\draw [line width=2.pt] (18.+5.5,1.)-- (17.+5.5,1.);
\draw (8.1+9+5.5,2.9) node[anchor=north west] {\scalebox{1.5}{\textcolor[rgb]{0.00,0.07,1.00}{\textbf{B}}}};
\draw (8.1+9+5.5,2.9+2) node[anchor=north west] {\scalebox{1.5}{\textcolor[rgb]{0.00,0.07,1.00}{\textbf{B}}}};
\draw (8.1+9+5.5,2.9+1) node[anchor=north west] {\scalebox{1.5}{\textcolor[rgb]{1.00,0.00,0.00}{\textbf{R}}}};
\fill[line width=0.pt,fill=black,fill opacity=1.0] (18.+5.5,1.) -- (18.+5.5,2.) -- (17.+5.5,2.) -- (17.+5.5,1.) -- cycle;

\draw [line width=2.pt] (20.+5.5,3.)-- (23.+5.5,3.);
\draw [line width=2.pt] (20.+5.5,2.)-- (23.+5.5,2.);
\draw [line width=2.pt] (23.+5.5,1.)-- (20.+5.5,1.);
\draw [line width=2.pt] (20.+5.5,1.)-- (20.+5.5,4.-1);
\draw [line width=2.pt] (21.+5.5,4.-1)-- (21.+5.5,1.);
\draw [line width=2.pt] (22.+5.5,1.)-- (22.+5.5,4.-1);
\draw [line width=2.pt] (23.+5.5,4.-1)-- (23.+5.5,1.);
\draw (8.1+12+5.5,2.9) node[anchor=north west] {\scalebox{1.5}{\textcolor[rgb]{0.00,0.07,1.00}{\textbf{B}}}};
\draw (8.1+14+5.5,2.9) node[anchor=north west] {\scalebox{1.5}{\textcolor[rgb]{0.00,0.07,1.00}{\textbf{B}}}};
\draw (8.1+13+5.5,2.9-1) node[anchor=north west] {\scalebox{1.5}{\textcolor[rgb]{0.00,0.07,1.00}{\textbf{B}}}};
\draw (8.1+13+5.5,2.9) node[anchor=north west] {\scalebox{1.5}{\textcolor[rgb]{1.00,0.00,0.00}{\textbf{R}}}};
\draw (8.1+14+5.5,2.9-1) node[anchor=north west] {\scalebox{1.5}{\textcolor[rgb]{1.00,0.00,0.00}{\textbf{R}}}};
\fill[line width=0.pt,fill=black,fill opacity=1.0] (21.+5.5,1.) -- (21.+5.5,2.) -- (20.+5.5,2.) -- (20.+5.5,1.) -- cycle;

\draw (5,3.3) node[anchor=north west] {\scalebox{3}{$=$}};
\draw (7,5.6) node[anchor=north west] {$\custombrace{4.8cm}{}$};
\draw (21.4,8) node[anchor=north west] {$\customslash{10cm}{1pt}$};
\draw (28.5+0.5,5.6) node[anchor=north west] {$\custombracee{4.8cm}{}$};
\draw (10.7,1.2) node[anchor=north west] {\scalebox{3}{$,$}};
\draw (10.7+13.5,1.2) node[anchor=north west] {\scalebox{3}{$,$}};
\draw (15.9,1.2) node[anchor=north west] {\scalebox{3}{$,$}};

\draw (8.1-7,2.9) node[anchor=north west] {\scalebox{1.5}{\textcolor[rgb]{0.00,0.07,1.00}{\textbf{B}}}};
\draw (8.1-5,2.9) node[anchor=north west] {\scalebox{1.5}{\textcolor[rgb]{0.00,0.07,1.00}{\textbf{B}}}};
\draw (8.1-6,2.9-1) node[anchor=north west] {\scalebox{1.5}{\textcolor[rgb]{0.00,0.07,1.00}{\textbf{B}}}};
\draw (8.1-6,2.9+1) node[anchor=north west] {\scalebox{1.5}{\textcolor[rgb]{0.00,0.07,1.00}{\textbf{B}}}};
\draw (8.1-7,2.9+2) node[anchor=north west] {\scalebox{1.5}{\textcolor[rgb]{0.00,0.07,1.00}{\textbf{B}}}};
\draw (8.1-5,2.9+2) node[anchor=north west] {\scalebox{1.5}{\textcolor[rgb]{0.00,0.07,1.00}{\textbf{B}}}};

\draw (8.1-6,2.9) node[anchor=north west] {\scalebox{1.5}{\textcolor[rgb]{1.00,0.00,0.00}{\textbf{R}}}};
\draw (8.1-5,2.9-1) node[anchor=north west] {\scalebox{1.5}{\textcolor[rgb]{1.00,0.00,0.00}{\textbf{R}}}};
\draw (8.1-7,2.9+1) node[anchor=north west] {\scalebox{1.5}{\textcolor[rgb]{1.00,0.00,0.00}{\textbf{R}}}};
\draw (8.1-5,2.9+1) node[anchor=north west] {\scalebox{1.5}{\textcolor[rgb]{1.00,0.00,0.00}{\textbf{R}}}};
\draw (8.1-6,2.9+2) node[anchor=north west] {\scalebox{1.5}{\textcolor[rgb]{1.00,0.00,0.00}{\textbf{R}}}};
\end{tikzpicture}}
\caption{The $(2,3)$-position and its options.}
\label{fig:fig1}
\end{center}
\end{figure}
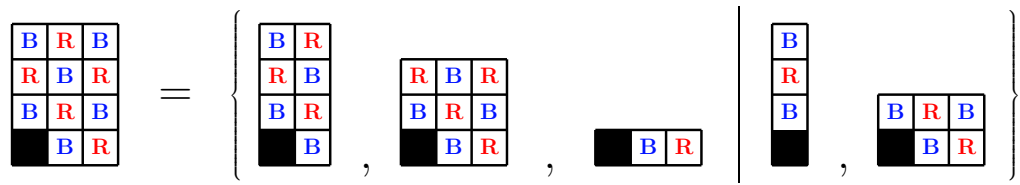

\subsection{Background}
\label{subsec:background}

\vspace{0.4cm}
 Using standard notation, where Left and Right are the players, a position is written in the \emph{form} $G=\{\GL \! \mid \! \GR \}$, where $\GL=\{G^{L_1},G^{L_2},\ldots\}$ is the set of Left \emph{options} from $G$ and $G^{L}$ is a particular Left option (the same for the Right options $\GR$).

 The possible {\em outcomes} of a position are $\mathscr{L}$, $\mathscr{N}$, $\mathscr{P}$, and $\mathscr{R}$. From Left's perspective, the first is the best (she wins, regardless of whether playing first or second) and the fourth is the worst (she loses, regardless of whether playing first or second). On the other hand, regarding $\mathscr {N}$ and $\mathscr{P}$, the victory depends on playing first or second, so these outcomes are not comparable. These considerations explain the partial order of outcomes:
\begin{center}
\begin{tikzpicture}[node distance=1cm]
  \node (L) at (0,2) {$\mathscr{L}$};
  \node (P) at (-1,1) {$\mathscr{P}$};
  \node (N) at (1,1) {$\mathscr{N}$};
  \node (R) at (0,0) {$\mathscr{R}$};

  \draw (L) -- (P);
  \draw (L) -- (N);
  \draw (P) -- (R);
  \draw (N) -- (R);
\end{tikzpicture}
 \end{center}

 The outcome function $o(G)$ is used to denote the outcome of $G$. The {\em outcome classes} $\L, \N, \R, \P$ are the sets of all positions with the indicated outcome, so that we can write $G\in \L$ when $o(G)=\mathscr{L}$.

 Often, positions decompose into components during the play. In those situations, a player has to choose a component in which to play and the \emph{disjunctive sum} is defined recursively: $$G+H=\{G^\mathcal{L}+H,G+H^\mathcal{L}\,|\,G^\mathcal{R}+H,G+H^\mathcal{R}\}.$$ With the concepts of outcome and disjunctive sum, the relations {\em inequality} and {\em equivalence} of positions are  defined by
$$G \succcurlyeq H  \textrm{ if and only if } o(G+X)\geqslant o(H+X) \textrm{ for all positions } X;$$
$$G\sim H \textrm{ if and only if } o(G+X)= o(H+X) \textrm{ for all positions } X.$$
The first means that replacing $H$ by $G$ can never hurt Left, no matter what the context is; the second means that $G$ acts like $H$ in any context.  In this paper, for ease, we always use the symbol $=$ instead of $\sim$; different situations determine if the symbol is being used for positions or outcomes. Furthermore, $G\succ H$ means that $G \succcurlyeq H$ and $G\not \sim H$. When two positions have exactly the same tree, they are \emph{isomorphic}, denoted as $G\cong H$. Naturally, if $G$ is isomorphic to $H$, $G$ is equal to $H$ in terms of equivalence, but the opposite direction may not be true.

Defining recursively $-G$ as $\{-\GR \! \mid \! -\GL \}$, it is a well-known fact that $G-G=0$, where zero is the terminal position $\{\varnothing\! \mid \! \varnothing \}$ with no options for either player. In fact, combinatorial positions with the disjunctive sum form a partially ordered abelian group, with the terminal position being the identity. In normal play, to determine whether $G$ is larger than or equal to $H$, it suffices to check that the disjunctive sum $G-H \in \L\cup\P$ meaning that, playing second, Left wins $G-H$. This result is crucial, as it is an \emph{options-only} result, without the need to consider the distinguishing positions $X$ as specified in the definition of inequality.

It is also known that each class in the quotient space determined by the equivalence of positions has a simplest and unique representative. This position, whose tree has the fewest number of edges, is referred to as a \emph{canonical form} or a \emph{game value}. In CGT, there are two types of reductions that can be applied to any position: \emph{domination} and \emph{reversibility}. Applying sequentially these reductions in any order to any position allows for the determination of its canonical form. In this document, only domination will be used, as described in the following theorem. It is an intuitive result, reflecting the idea that a player may never chooses an option if they have at least as good an alternative.

\begin{theorem}[Domination]\label{thm:domination}
If $G$ is a position where $G^{L_1}, G^{L_2}\in \GL$ and $G^{L_2}\succcurlyeq G^{L_1}$, then $G=\{\GL\setminus\{G^{L_1}\} \! \mid \! \GR \}$.
\end{theorem}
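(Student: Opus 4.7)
The plan is to show both $G \succcurlyeq H$ and $H \succcurlyeq G$, where $H := \{\GL\setminus\{G^{L_1}\}\!\mid\!\GR\}$. By the options-only criterion mentioned in the background (second-player Left wins the difference), it suffices to argue that Left wins $G-H$ playing second and also wins $H-G$ playing second.

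For $G \succcurlyeq H$, I would use a direct mirror strategy in $G-H$. Since the Left options of $H$ form a subset of those of $G$, and both positions share the same Right options, any Right move is of one of two shapes: a move in $G$ to some $G^{R_j}$, to which Left replies in $-H$ by moving to $-G^{R_j}$ (which is a Left option of $-H$ because $G^{R_j}\in\GR=H^\R$); or a move in $-H$ to $-G^{L_k}$ with $k\neq 1$, to which Left replies in $G$ by moving to $G^{L_k}$. Each pairing produces a position of the form $Z-Z$, which equals $0$ and is lost by the player to move, so Left wins playing second.

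For $H \succcurlyeq G$, I would run the same mirror strategy in $H-G$, and the only case that does not immediately pair off is when Right plays in the $-G$ component to $-G^{L_1}$, leaving $H - G^{L_1}$. Here is where the hypothesis is used: Left replies in $H$ by moving to $G^{L_2}$, producing $G^{L_2} - G^{L_1}$. By the hypothesis $G^{L_2}\succcurlyeq G^{L_1}$ and the options-only criterion, $G^{L_2}-G^{L_1}\in\L\cup\P$, i.e.\ Left (now the second player) wins from there. All other Right first moves in $H-G$ are handled by the same mirror pairing as before.

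The whole argument is a straightforward two-direction strategy-stealing pattern, and the only non-automatic step is precisely the case $-G \to -G^{L_1}$ described above. That is the step where the dominance hypothesis is invoked; it is not technically difficult, but it is the heart of the proof and is the reason the theorem holds. A small point of care is that the argument is implicitly inductive on the game tree, since the options-only criterion is applied to the position $G^{L_2}-G^{L_1}$, which is strictly simpler than $G-H$; this induction is standard and presents no real obstacle.
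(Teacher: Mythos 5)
Your proof is correct: the two-directional mirror argument, with the single non-trivial case where Right moves to $-G^{L_1}$ and Left answers with $G^{L_2}$ invoking $G^{L_2}\succcurlyeq G^{L_1}$ via the options-only criterion, is exactly the standard proof of domination found in the references the paper cites (the paper itself states this theorem as background without proof). The only minor remark is that the hypothesis implicitly requires $G^{L_2}$ to remain an option of $\GL\setminus\{G^{L_1}\}$, i.e.\ $G^{L_1}$ and $G^{L_2}$ are distinct, which your argument correctly uses.
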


An important family of game values are \emph{numbers}. In practice, a player only chooses to play on a number in a disjunctive sum if all the components are numbers. In other words, numbers are a kind of ``conquered territory'' that should only be used in endgames, when there is nothing more useful to do. An \emph{integer} corresponds to a guaranteed integer number of moves for a player whose canonical form is $0=\{\varnothing\! \mid \! \varnothing \}$, $n=\{n-1\! \mid \! \varnothing \}$, or $-n=\{\varnothing\! \mid \! -(n-1) \}$, depending on whether it is zero, positive, or negative. A non-integer \emph{dyadic fraction} has the canonical form $\frac{m}{2^j}=\{\frac{m-1}{2^j}\! \mid \!\frac{m+1}{2^j}\}$ or $-\frac{m}{2^j}=\{-\frac{m+1}{2^j}\! \mid \!-\frac{m-1}{2^j}\}$, depending on whether it is positive or negative ($j>0$ and $m$ odd). In these cases as well, there is a practical interpretation: a position $G$ is equal to $\frac{1}{2}$ if it takes two copies of that position in a disjunctive sum to equal one guaranteed move for Left.

If $G^{\mathcal{L}}$ and $G^{\mathcal{R}}$ are all numbers such that all elements of the first are strictly less than all elements of the second, then the position $\{\GL \! \mid \! \GR \}$ is the simplest number between the maximum element of $G^{\mathcal{L}}$ and the minimum element of $G^{\mathcal{R}}$. This fact is known as the \emph{simplicity rule} and can be stated as follows.

\begin{theorem}[Simplicity Rule]\label{thm:simplicity}
If $G=\{G^L \! \mid \! G^R \}$ where $G^L$ and $G^R$ are numbers, and $G^L\prec G^R$, then we have the following:
    \begin{itemize}
        \item[]If there are integers $n$ such that $G^L\prec n\prec G^R$ and $n$ is the smallest in absolute value under these conditions, then $G=n$;
         \item[]If there are no integers $n$ such that $G^L \prec n\prec G^R$, and $d=\frac{i}{2^j}$ is the dyadic fraction between $G^L$ and $G^R$ for which $j$ is minimal, then $G=d$.
    \end{itemize}
\end{theorem}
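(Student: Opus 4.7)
My approach is to show that $G - d \in \P$, where $d$ is the number claimed by the theorem; by the options-only criterion recalled just above, this is equivalent to $G = d$. The argument splits cleanly into a structural fact about the canonical options of the number $d$ and a strategy-stealing argument built on top of it.

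\emph{Structural lemma.} I would first establish that $d^L \preccurlyeq \max G^L$ and, whenever it exists, $d^R \succcurlyeq \min G^R$. In the integer sub-case, $d = n$ is the integer of minimum absolute value strictly between $\max G^L$ and $\min G^R$, and the minimality of $|n|$ prevents $n-1$ or $n+1$ from lying in the same open interval. In the dyadic sub-case $d = m/2^j$, the minimality of $j$ does the same job: the canonical options $(m \pm 1)/2^j$ sit at dyadic level strictly less than $j$ (since $m \pm 1$ is even), so they cannot lie strictly between $\max G^L$ and $\min G^R$ without contradicting the choice of $j$. In both sub-cases the canonical options of $d$ fall on the ``correct'' side of the options of $G$.

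\emph{Showing $G - d \in \P$.} For Left moving second in $G - d$, two kinds of Right move must be parried. If Right plays $G \to G^{R_j}$, the resulting position is the number $G^{R_j} - d$ with $G^{R_j} \succcurlyeq \min G^R \succ d$, hence strictly positive and in $\L$; Left wins. If Right plays $-d \to -d^L$ (only available when $d^L$ exists), the resulting position is $G - d^L$, and Left replies with $G \to G^{L_i}$ for some $G^{L_i} \succcurlyeq d^L$, which exists by the structural lemma. The ensuing position $G^{L_i} - d^L$ is a non-negative number, hence in $\L \cup \P$, so Left wins from there. A completely symmetric strategy, invoking $d^R \succcurlyeq \min G^R$, shows Right also wins $G - d$ moving second. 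Therefore $G - d \in \P$, and $G = d$ follows.

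\emph{Where the difficulty lies.} The conceptual core is the structural lemma, since that is where the ``simplicity'' of $d$ is genuinely consumed; the strategy half is routine comparison of numbers. The main bookkeeping hazard is handling the two sub-cases uniformly together with the corner situations in which one of $d^L, d^R$ is empty (e.g.\ when $d$ is an integer of extremal sign relative to the options), where the corresponding branch of the strategy is simply unavailable to the opponent and so nothing needs to be proved. A uniform birthday-based formulation of the simplest-number condition would let both sub-cases be dispatched in one stroke if a cleaner presentation is desired.
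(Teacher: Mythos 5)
The paper does not actually prove this statement: the Simplicity Rule is quoted in the Background subsection as a classical CGT result, with the standard references, and is only \emph{used} later (in Lemma \ref{lem:facts2}). Your argument is correct and is essentially the textbook proof of the simplest-number theorem: reduce $G=d$ to $G-d\in\mathcal{P}$, and observe that the canonical options of the simplest $d$ cannot lie strictly between the best Left option and the best Right option of $G$ (else a strictly simpler number would lie there), so each of Right's moves in $G-d$ can be parried, and symmetrically for Left. The corner cases you flag do resolve as you say; in particular, if $G$ has no Left options then $d$ is a non-positive integer, so $d^L$ does not exist and the only potentially troublesome Right move in $-d$ is never available.
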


As we will see shortly, the ruleset {\sc blue-red hackenbush} is closely related to {\sc partisan chocolate game}. Its positions are graphs with blue and red edges that are connected to the ground.
On Left's move, she may remove any blue edge. On Right's move, he may remove any red edge. After either player makes a move, any edge no longer connected to the ground is also removed. The values of alternating strings, as the one illustrated in Figure \ref{fig:fig2}, will be particularly important for what follows and is given by $1-\frac{1}{2}+\frac{1}{4}-\frac{1}{8}+\frac{1}{16}\cdots$

\begin{figure}[htbp]
\begin{center}
\scalebox{0.6}{
\definecolor{ffqqqq}{rgb}{1.,0.,0.}
\definecolor{qqqqff}{rgb}{0.,0.,1.}
\definecolor{ududff}{rgb}{0.30196078431372547,0.30196078431372547,1.}
\definecolor{xdxdff}{rgb}{0.49019607843137253,0.49019607843137253,1.}
\begin{tikzpicture}
\clip(4.,2.5) rectangle (8.,7.5);
\draw [line width=3.6pt] (5.,3.)-- (7.,3.);
\draw [line width=2.pt,color=qqqqff] (6.,3.)-- (6.,4.);
\draw [line width=2.pt,color=ffqqqq] (6.,4.)-- (6.,5.);
\draw [line width=2.pt,color=qqqqff] (6.,5.)-- (6.,6.);
\draw [line width=2.pt,color=ffqqqq] (6.,6.)-- (6.,7.);
\begin{scriptsize}
\draw [fill=white] (6.,3.) circle (2.5pt);
\draw [fill=white] (6.,4.) circle (2.5pt);
\draw [fill=white] (6.,5.) circle (2.5pt);
\draw [fill=white] (6.,6.) circle (2.5pt);
\draw [fill=white] (6.,7.) circle (2.5pt);
\end{scriptsize}
\end{tikzpicture}}
\caption{An alternating {\sc blue-red hackenbush} string with a value of $1-\frac{1}{2}+\frac{1}{4}-\frac{1}{8}=\frac{5}{8}$.}
\label{fig:fig2}
\end{center}
\end{figure}
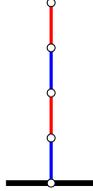

The sequence  explicitly representing the values of the {\sc blue-red hackenbush} alternating strings is $$1,\frac{1}{2},\frac{3}{4},\frac{5}{8},\frac{11}{16},\frac{21}{32},\ldots$$ It is clear that the sequence of denominators is given by $2^{n-1}$. Due to the recursive law $J_{n+1}=2J_n+(-1)^n$,
the sequence of numerators is $1, 1, 3, 5, 11, 21, \ldots$ This is the Jacobsthal Sequence, whose
closed formula is $J_n=\frac{2^n-(-1)^n}{3}$ \cite{OEIS}. Therefore, the values of the {\sc blue-red hackenbush} alternating strings are described by the formula $H_n=\frac{2^n-(-1)^n}{3\times 2^{n-1}}$.

Just as in the case of {\sc blue-red hackenbush}, there are many other rulesets where positions exclusively allow numbers as game values. As we will illustrate, {\sc partisan chocolate game} falls into this category. To prove that, we will rely on a recent finding presented in \cite{Car21}. Consider a set $S$ containing positions from a ruleset. This set is referred to as a \emph{hereditarily closed set of positions of a ruleset} (HCR) if it remains closed when options are taken -- meaning, if an option of an element in $S$ is also an element in $S$. If, for any position $G\in S$, all pairs $(G^L, G^R) \in G^{\mathcal{L}} \times G^{\mathcal{R}}$ satisfy the following property, then all positions $G\in S$ are numbers.

\begin{definition}[F--Loss Property]
Let $S$ be a \emph{HCR}. Given $G\in S$, the pair $(G^L, G^R) \in
G^{\mathcal{L}} \times G^{\mathcal{R}}$ satisfies the F--loss property if there is $G^{RL} \in G^{R{\mathcal{L}}}$ such that $G^{RL} \succcurlyeq G^L$ or there is $G^{LR} \in G^{L{\mathcal{R}}}$ such that $G^{LR} \preccurlyeq G^R$.
\end{definition}

\section{{\sc partisan chocolate game} positions are numbers}
\label{sec:numbers}

A logical first step in analyzing {\sc partisan chocolate game} is the identification of some simple patterns, involving only the previously mentioned options-only comparison. The following theorem states some that are quite useful.

\begin{theorem}\label{thm:patterns}
The comparison of two chocolate bars displays the following patterns.
    \begin{itemize}
        \item[]Given the choice between two chocolate bars with red squares in the upper right corners and the same number of columns (or the same number of rows), Left prefers the larger one. Given the choice between two chocolate bars with blue squares in the upper right corners and the same number of columns (or the same number of rows), Left prefers the smaller one. These facts can be stated as follows.
            \begin{itemize}
            \item[]\emph{(i1a)} If $n$, $m$ and $m'$ are nonnegative integers such that $n+m$ and $n+m'$ are even, and $m\geqslant m'$, then $(n,m)\succcurlyeq (n,m')$. \emph{(i1b)} Analogously, if $n$, $n'$ and $m$ are nonnegative integers such that $n+m$ and $n'+m$ are even and $n\geqslant n'$, then $(n,m)\succcurlyeq (n',m)$.
            \item[]\emph{(i2a)} If $n$, $m$ and $m'$ are nonnegative integers such that $n+m$ and $n+m'$ are odd, and $m\geqslant m'$, then $(n,m)\preccurlyeq (n,m')$. \emph{(i2b)} Analogously, if $n$, $n'$ and $m$ are nonnegative integers such that $n+m$ and $n'+m$ are odd and $n\geqslant n'$, then $(n,m)\preccurlyeq (n',m)$.
            \end{itemize}
         \item[]Given the choice between two chocolate bars with different colored squares in the upper right corners and the same number of columns (or the same number of rows), Left prefers the one with the blue square in the upper right corner. This fact can be stated as follows.
            \begin{itemize}
            \item[] \emph{(iia)} If $n$, $m$ and $m'$ are nonnegative integers such that $n+m$ is odd and $n+m'$ is even, then $(n,m)\succcurlyeq (n,m')$. \emph{(iib)} Analogously, if $n$, $n'$ and $m$ are nonnegative integers such that $n+m$ is odd and $n'+m$ is even, then $(n,m)\succcurlyeq (n',m)$.
            \end{itemize}
            \item[]Two chocolate bars with blue squares in their upper right corners where these corners align on the same downward-sloping diagonal are equivalent. Two chocolate bars with red squares in their upper right corners, aligned on the same downward-sloping diagonal, and having an odd number of columns between them are equivalent. These facts can be stated as follows.
            \begin{itemize}
            \item[]\emph{(iiia)} If $n$, $m$, $n'$ and $m'$ are nonnegative integers such that $n+m=n'+m'$ are odd, then $(n,m)= (n',m')$. \emph{(iiib)} Analogously, if $n$, $m$, $n'$ and $m'$ are nonnegative integers such that $n+m=n'+m'$ are even and $|n-n'|$ is even, then $(n,m)= (n',m')$.
            \end{itemize}
    \end{itemize}
\end{theorem}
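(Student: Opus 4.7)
The plan is to prove all six inequalities in parts (i)--(ii) and both equivalences in (iii) by a single simultaneous strong induction on the total number of squares $(n{+}1)(m{+}1)+(n'{+}1)(m'{+}1)$ of the two bars being compared. For every claimed inequality $G \succcurlyeq H$ I would invoke the standard options-only criterion: $G \succcurlyeq H$ holds if and only if no Right option of $G$ is $\preccurlyeq H$ and no Left option of $H$ is $\succcurlyeq G$. The equivalences in (iii) are obtained by establishing $\succcurlyeq$ in both directions between two diagonally-aligned bars.

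The structural observation driving the induction is a parity lemma: every Left option of any bar has a red upper-right corner, and every Right option has a blue upper-right corner. The reason is that a player cuts adjacent to a square of their own colour, that square is eaten with the cut-off portion, and the new upper-right corner is its diagonal neighbour (one step down and one step left), hence of the opposite colour. Consequently every pair of bars that arises during the inductive verification of the criterion falls inside one of the coloured-UR patterns of the theorem itself, now at a strictly smaller total size --- which is exactly what the induction hypothesis supplies.

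For concreteness, in the proof of (i1a) --- showing $(n,m) \succcurlyeq (n,m')$ with $n+m,\;n+m'$ both even and $m\geq m'$ --- I would enumerate Right's options of $(n,m)$ (the blue-UR bars $(i{-}1,m)$ and $(n,j{-}1)$) and Left's options of $(n,m')$ (the red-UR bars $(i{-}1,m')$ and $(n,j{-}1)$). A Right option of $(n,m)$ is never $\preccurlyeq (n,m')$ by chaining (iia)/(iib) with the diagonal equivalences (iiia)/(iiib) at smaller scales to reduce the mixed-colour comparison to one sharing a coordinate. A Left option of $(n,m')$ of the shape $(n,j{-}1)$ is never $\succcurlyeq (n,m)$ by (i1a) itself at smaller size, while one of the shape $(i{-}1,m')$ is handled by combining (iiib) with (i1a). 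The other inequalities (i1b), (i2a), (i2b), (iia), (iib) follow by essentially the same template with appropriate sub-pattern substitutions.

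I expect the principal obstacle to be the diagonal equivalences (iiia)/(iiib), since the inequalities in (i)/(ii) only compare bars sharing a coordinate whereas the equivalences identify bars that differ in both. These I would prove by establishing the single-step diagonal shifts directly --- $(n,m) = (n{-}1,m{+}1)$ along a blue diagonal and $(n,m) = (n{-}2,m{+}2)$ along a red diagonal --- by the same options-only criterion, using (iia)/(iib) when a blue option must be compared to a red one, (i1)/(i2) when matching colours, and the already-available smaller-scale instances of (iiia)/(iiib) when the options happen to lie on the same diagonal. Iterating these single-step equivalences reaches any pair on the same diagonal, and choosing the total number of squares as the induction parameter guarantees every hypothesis invocation is at a strictly smaller instance, so the tight interleaving of (i)/(ii) with (iii) carries no circularity.
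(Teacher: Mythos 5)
Your overall architecture (one simultaneous induction over all the items, with the diagonal equivalences (iiia)/(iiib) feeding into the proofs of (i) and (ii)) matches the paper's, and your parity observation about the colours of Left and Right options is correct in its conclusion, though the new upper-right corner is the \emph{orthogonal} neighbour of the cut-defining square, not its diagonal neighbour --- the diagonal neighbour would have the same colour. The genuine gap is in how you discharge the two negative conditions of the options-only criterion. To verify $G\succcurlyeq H$ you must show $G^R\not\preccurlyeq H$ for every $G^R$ and $H^L\not\succcurlyeq G$ for every $H^L$, and you propose to do this by citing inequalities of the form $G^R\succcurlyeq H$ and $G\succcurlyeq H^L$ drawn from smaller instances of (i)--(iii). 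But $A\succcurlyeq B$ does not imply $A\not\preccurlyeq B$: the two are compatible precisely when $A=B$, and nothing in the theorem or in your induction hypothesis excludes that, since every inequality in the statement is non-strict. For example, in (i1a) you must show that the Left option $(n,m'-2k)$ of $(n,m')$ is not $\succcurlyeq(n,m)$; the inductive instance of (i1a) only yields $(n,m)\succcurlyeq(n,m'-2k)$, which is consistent with equality and hence with $(n,m'-2k)\succcurlyeq(n,m)$. The same problem recurs in every one of your verification steps, and the paper explicitly remarks that strictness is \emph{not} available at this stage, so the chain of weak inequalities cannot close the argument.

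The repair is to establish each $\not\preccurlyeq$ or $\not\succcurlyeq$ statement the way such statements must be established: by exhibiting a winning first move for the appropriate player in the relevant difference game, rather than by appealing to a previously proved weak inequality. Concretely, after Right moves $(n,m)-(n,m')$ to $(n,m)-(n,m'-2k)$, Left answers $(n,m)\to(n,m'-2k)$ and reaches $X-X=0$; after Right's moves in the first component, Left either mirrors (again reaching $X-X=0$) or plays a move whose resulting position is shown to be $\succcurlyeq 0$ by the inductive instances of (i1a), (iiia), (iiib) used in the correct orientation. That is exactly the shape of the paper's proof of (i1a). So the skeleton of your induction is sound, but the key verification step as written would fail, and fixing it essentially converts your argument into the paper's.
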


Figures \ref{fig:fig3}, \ref{fig:fig4}, and \ref{fig:fig5} illustrate the patterns outlined in Theorem \ref{thm:patterns}. We will see later that some inequalities are strict, but this fact is not needed for this section.

\begin{figure}[htb!]
\begin{center}
\scalebox{0.4}{
\begin{tikzpicture}
\clip(1.8,0.8) rectangle (23.2,6.2);
\fill[line width=0.pt,fill=black,fill opacity=1.0] (3.,2.) -- (2.,2.) -- (2.,1.) -- (3.,1.) -- cycle;
\fill[line width=0.pt,fill=black,fill opacity=1.0] (7.,1.) -- (8.,1.) -- (8.,2.) -- (7.,2.) -- cycle;
\fill[line width=0.pt,fill=black,fill opacity=1.0] (18.,1.) -- (19.,1.) -- (19.,2.) -- (18.,2.) -- cycle;
\fill[line width=0.pt,fill=black,fill opacity=1.0] (14.,1.) -- (14.,2.) -- (13.,2.) -- (13.,1.) -- cycle;
\draw [line width=2.pt] (2.,1.)-- (2.,6.);
\draw [line width=2.pt] (2.,6.)-- (5.,6.);
\draw [line width=2.pt] (2.,1.)-- (5.,1.);
\draw [line width=2.pt] (5.,1.)-- (5.,6.);
\draw [line width=2.pt] (7.,1.)-- (7.,4.);
\draw [line width=2.pt] (7.,4.)-- (10.,4.);
\draw [line width=2.pt] (10.,4.)-- (10.,1.);
\draw [line width=2.pt] (10.,1.)-- (7.,1.);
\draw [line width=2.pt] (2.,2.)-- (5.,2.);
\draw [line width=2.pt] (2.,3.)-- (5.,3.);
\draw [line width=2.pt] (2.,4.)-- (5.,4.);
\draw [line width=2.pt] (2.,5.)-- (5.,5.);
\draw [line width=2.pt] (3.,1.)-- (3.,6.);
\draw [line width=2.pt] (4.,1.)-- (4.,6.);
\draw [line width=2.pt] (7.,2.)-- (10.,2.);
\draw [line width=2.pt] (7.,3.)-- (10.,3.);
\draw [line width=2.pt] (8.,1.)-- (8.,4.);
\draw [line width=2.pt] (9.,1.)-- (9.,4.);
\draw [line width=2.pt] (13.,1.)-- (13.,3.);
\draw [line width=2.pt] (16.,3.)-- (13.,3.);
\draw [line width=2.pt] (13.,1.)-- (16.,1.);
\draw [line width=2.pt] (16.,1.)-- (16.,3.);
\draw [line width=2.pt] (18.,1.)-- (18.,3.);
\draw [line width=2.pt] (18.,3.)-- (23.,3.);
\draw [line width=2.pt] (23.,3.)-- (23.,1.);
\draw [line width=2.pt] (23.,1.)-- (18.,1.);
\draw [line width=2.pt] (14.,1.)-- (14.,3.);
\draw [line width=2.pt] (15.,1.)-- (15.,3.);
\draw [line width=2.pt] (13.,2.)-- (16.,2.);
\draw [line width=2.pt] (18.,2.)-- (23.,2.);
\draw [line width=2.pt] (19.,1.)-- (19.,3.);
\draw [line width=2.pt] (20.,3.)-- (20.,1.);
\draw [line width=2.pt] (21.,1.)-- (21.,3.);
\draw [line width=2.pt] (22.,1.)-- (22.,3.);

\draw (8.1+2-6,2.9+1-1) node[anchor=north west] {\scalebox{1.5}{\textcolor[rgb]{0.00,0.07,1.00}{\textbf{B}}}};
\draw (8.1+2-7,2.9-1) node[anchor=north west] {\scalebox{1.5}{\textcolor[rgb]{0.00,0.07,1.00}{\textbf{B}}}};
\draw (8.1-6,2.9+1-1) node[anchor=north west] {\scalebox{1.5}{\textcolor[rgb]{0.00,0.07,1.00}{\textbf{B}}}};
\draw (8.1+2-7,2.9+1) node[anchor=north west] {\scalebox{1.5}{\textcolor[rgb]{0.00,0.07,1.00}{\textbf{B}}}};
\draw (8.1+2-8,2.9+2) node[anchor=north west] {\scalebox{1.5}{\textcolor[rgb]{0.00,0.07,1.00}{\textbf{B}}}};
\draw (8.1+2-6,2.9+2) node[anchor=north west] {\scalebox{1.5}{\textcolor[rgb]{0.00,0.07,1.00}{\textbf{B}}}};
\draw (8.1+2-7,2.9+3) node[anchor=north west] {\scalebox{1.5}{\textcolor[rgb]{0.00,0.07,1.00}{\textbf{B}}}};

\draw (8.1+2-2,2.9+1) node[anchor=north west] {\scalebox{1.5}{\textcolor[rgb]{0.00,0.07,1.00}{\textbf{B}}}};
\draw (8.1+2-3,2.9) node[anchor=north west] {\scalebox{1.5}{\textcolor[rgb]{0.00,0.07,1.00}{\textbf{B}}}};
\draw (8.1+2-1,2.9) node[anchor=north west] {\scalebox{1.5}{\textcolor[rgb]{0.00,0.07,1.00}{\textbf{B}}}};
\draw (8.1+1-1,2.9-1) node[anchor=north west] {\scalebox{1.5}{\textcolor[rgb]{0.00,0.07,1.00}{\textbf{B}}}};

\draw (8.1+7-1,2.9-1) node[anchor=north west] {\scalebox{1.5}{\textcolor[rgb]{0.00,0.07,1.00}{\textbf{B}}}};
\draw (8.1+8-1,2.9) node[anchor=north west] {\scalebox{1.5}{\textcolor[rgb]{0.00,0.07,1.00}{\textbf{B}}}};
\draw (8.1+6-1,2.9) node[anchor=north west] {\scalebox{1.5}{\textcolor[rgb]{0.00,0.07,1.00}{\textbf{B}}}};

\draw (8.1+11,2.9-1) node[anchor=north west] {\scalebox{1.5}{\textcolor[rgb]{0.00,0.07,1.00}{\textbf{B}}}};
\draw (8.1+13,2.9-1) node[anchor=north west] {\scalebox{1.5}{\textcolor[rgb]{0.00,0.07,1.00}{\textbf{B}}}};
\draw (8.1+14,2.9) node[anchor=north west] {\scalebox{1.5}{\textcolor[rgb]{0.00,0.07,1.00}{\textbf{B}}}};
\draw (8.1+12,2.9) node[anchor=north west] {\scalebox{1.5}{\textcolor[rgb]{0.00,0.07,1.00}{\textbf{B}}}};
\draw (8.1+10,2.9) node[anchor=north west] {\scalebox{1.5}{\textcolor[rgb]{0.00,0.07,1.00}{\textbf{B}}}};

\draw (8.1+2-7,2.9+1-1) node[anchor=north west] {\scalebox{1.5}{\textcolor[rgb]{1.00,0.00,0.00}{\textbf{R}}}};
\draw (8.1+2-6,2.9+1-2) node[anchor=north west] {\scalebox{1.5}{\textcolor[rgb]{1.00,0.00,0.00}{\textbf{R}}}};
\draw (8.1+2-7-1,2.9+1) node[anchor=north west] {\scalebox{1.5}{\textcolor[rgb]{1.00,0.00,0.00}{\textbf{R}}}};
\draw (8.1+2-7+1,2.9+1) node[anchor=north west] {\scalebox{1.5}{\textcolor[rgb]{1.00,0.00,0.00}{\textbf{R}}}};
\draw (8.1+2-7,2.9+1+1) node[anchor=north west] {\scalebox{1.5}{\textcolor[rgb]{1.00,0.00,0.00}{\textbf{R}}}};
\draw (8.1+2-8,2.9+1+2) node[anchor=north west] {\scalebox{1.5}{\textcolor[rgb]{1.00,0.00,0.00}{\textbf{R}}}};
\draw (8.1+2-6,2.9+1+2) node[anchor=north west] {\scalebox{1.5}{\textcolor[rgb]{1.00,0.00,0.00}{\textbf{R}}}};

\draw (8.1+2-2,2.9+1-1) node[anchor=north west] {\scalebox{1.5}{\textcolor[rgb]{1.00,0.00,0.00}{\textbf{R}}}};
\draw (8.1+2-1,2.9-1) node[anchor=north west] {\scalebox{1.5}{\textcolor[rgb]{1.00,0.00,0.00}{\textbf{R}}}};
\draw (8.1+2-1,2.9+1) node[anchor=north west] {\scalebox{1.5}{\textcolor[rgb]{1.00,0.00,0.00}{\textbf{R}}}};
\draw (8.1-1,2.9+1) node[anchor=north west] {\scalebox{1.5}{\textcolor[rgb]{1.00,0.00,0.00}{\textbf{R}}}};

\draw (8.1+6,2.9) node[anchor=north west] {\scalebox{1.5}{\textcolor[rgb]{1.00,0.00,0.00}{\textbf{R}}}};
\draw (8.1+7,2.9-1) node[anchor=north west] {\scalebox{1.5}{\textcolor[rgb]{1.00,0.00,0.00}{\textbf{R}}}};

\draw (8.1+7+5,2.9-1) node[anchor=north west] {\scalebox{1.5}{\textcolor[rgb]{1.00,0.00,0.00}{\textbf{R}}}};
\draw (8.1+7+7,2.9-1) node[anchor=north west] {\scalebox{1.5}{\textcolor[rgb]{1.00,0.00,0.00}{\textbf{R}}}};
\draw (8.1+7+6,2.9) node[anchor=north west] {\scalebox{1.5}{\textcolor[rgb]{1.00,0.00,0.00}{\textbf{R}}}};
\draw (8.1+7+4,2.9) node[anchor=north west] {\scalebox{1.5}{\textcolor[rgb]{1.00,0.00,0.00}{\textbf{R}}}};

\draw (5.5,2.6) node[anchor=north west] {\scalebox{2.5}{$\succcurlyeq$}};
\draw (5.5+11,2.6) node[anchor=north west] {\scalebox{2.5}{$\succcurlyeq$}};
\end{tikzpicture}}
\caption{Items i1a and i2b of Theorem \ref{thm:patterns}.}
\label{fig:fig3}
\end{center}
\end{figure}
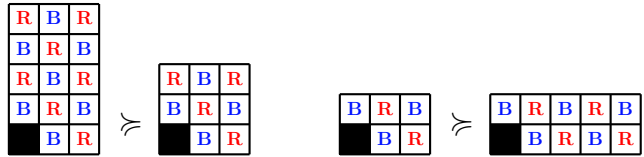

\begin{figure}[htb!]
\begin{center}
\scalebox{0.4}{
\begin{tikzpicture}
\clip(1.8,0.8) rectangle (10.2,6.2);
\fill[line width=0.pt,fill=black,fill opacity=1.0] (3.,2.) -- (2.,2.) -- (2.,1.) -- (3.,1.) -- cycle;
\fill[line width=0.pt,fill=black,fill opacity=1.0] (7.,1.) -- (8.,1.) -- (8.,2.) -- (7.,2.) -- cycle;
\draw [line width=2.pt] (2.,1.)-- (2.,6.);
\draw [line width=2.pt] (2.,6.)-- (5.,6.);
\draw [line width=2.pt] (2.,1.)-- (5.,1.);
\draw [line width=2.pt] (5.,1.)-- (5.,6.);
\draw [line width=2.pt] (7.,1.)-- (7.,3.);
\draw [line width=2.pt] (10.,3.)-- (10.,1.);
\draw [line width=2.pt] (10.,1.)-- (7.,1.);
\draw [line width=2.pt] (2.,2.)-- (5.,2.);
\draw [line width=2.pt] (2.,3.)-- (5.,3.);
\draw [line width=2.pt] (2.,4.)-- (5.,4.);
\draw [line width=2.pt] (2.,5.)-- (5.,5.);
\draw [line width=2.pt] (3.,1.)-- (3.,6.);
\draw [line width=2.pt] (4.,1.)-- (4.,6.);
\draw [line width=2.pt] (7.,2.)-- (10.,2.);
\draw [line width=2.pt] (7.,3.)-- (10.,3.);
\draw [line width=2.pt] (8.,1.)-- (8.,3.);
\draw [line width=2.pt] (9.,1.)-- (9.,3.);

\draw (8.1+2-6,2.9+1-1) node[anchor=north west] {\scalebox{1.5}{\textcolor[rgb]{0.00,0.07,1.00}{\textbf{B}}}};
\draw (8.1+2-7,2.9-1) node[anchor=north west] {\scalebox{1.5}{\textcolor[rgb]{0.00,0.07,1.00}{\textbf{B}}}};
\draw (8.1-6,2.9+1-1) node[anchor=north west] {\scalebox{1.5}{\textcolor[rgb]{0.00,0.07,1.00}{\textbf{B}}}};
\draw (8.1+2-7,2.9+1) node[anchor=north west] {\scalebox{1.5}{\textcolor[rgb]{0.00,0.07,1.00}{\textbf{B}}}};
\draw (8.1+2-8,2.9+2) node[anchor=north west] {\scalebox{1.5}{\textcolor[rgb]{0.00,0.07,1.00}{\textbf{B}}}};
\draw (8.1+2-6,2.9+2) node[anchor=north west] {\scalebox{1.5}{\textcolor[rgb]{0.00,0.07,1.00}{\textbf{B}}}};
\draw (8.1+2-7,2.9+3) node[anchor=north west] {\scalebox{1.5}{\textcolor[rgb]{0.00,0.07,1.00}{\textbf{B}}}};

\draw (8.1+2-3,2.9) node[anchor=north west] {\scalebox{1.5}{\textcolor[rgb]{0.00,0.07,1.00}{\textbf{B}}}};
\draw (8.1+2-1,2.9) node[anchor=north west] {\scalebox{1.5}{\textcolor[rgb]{0.00,0.07,1.00}{\textbf{B}}}};
\draw (8.1+1-1,2.9-1) node[anchor=north west] {\scalebox{1.5}{\textcolor[rgb]{0.00,0.07,1.00}{\textbf{B}}}};

\draw (8.1+2-7,2.9+1-1) node[anchor=north west] {\scalebox{1.5}{\textcolor[rgb]{1.00,0.00,0.00}{\textbf{R}}}};
\draw (8.1+2-6,2.9+1-2) node[anchor=north west] {\scalebox{1.5}{\textcolor[rgb]{1.00,0.00,0.00}{\textbf{R}}}};
\draw (8.1+2-7-1,2.9+1) node[anchor=north west] {\scalebox{1.5}{\textcolor[rgb]{1.00,0.00,0.00}{\textbf{R}}}};
\draw (8.1+2-7+1,2.9+1) node[anchor=north west] {\scalebox{1.5}{\textcolor[rgb]{1.00,0.00,0.00}{\textbf{R}}}};
\draw (8.1+2-7,2.9+1+1) node[anchor=north west] {\scalebox{1.5}{\textcolor[rgb]{1.00,0.00,0.00}{\textbf{R}}}};
\draw (8.1+2-8,2.9+1+2) node[anchor=north west] {\scalebox{1.5}{\textcolor[rgb]{1.00,0.00,0.00}{\textbf{R}}}};
\draw (8.1+2-6,2.9+1+2) node[anchor=north west] {\scalebox{1.5}{\textcolor[rgb]{1.00,0.00,0.00}{\textbf{R}}}};

\draw (8.1+2-2,2.9+1-1) node[anchor=north west] {\scalebox{1.5}{\textcolor[rgb]{1.00,0.00,0.00}{\textbf{R}}}};
\draw (8.1+2-1,2.9-1) node[anchor=north west] {\scalebox{1.5}{\textcolor[rgb]{1.00,0.00,0.00}{\textbf{R}}}};

\draw (5.5,2.6) node[anchor=north west] {\scalebox{2.5}{$\preccurlyeq$}};
\end{tikzpicture}}
\caption{Item iia of Theorem \ref{thm:patterns}.}
\label{fig:fig4}
\end{center}
\end{figure}
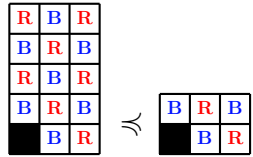

\clearpage
\begin{figure}[htb!]
\begin{center}
\scalebox{0.4}{
\begin{tikzpicture}
\clip(1.8,0.8) rectangle (19.2,9.2);
\draw (8.1+2-6,2.9+1-1) node[anchor=north west] {\scalebox{1.5}{\textcolor[rgb]{0.00,0.07,1.00}{\textbf{B}}}};
\draw (8.1-6,2.9+1-1) node[anchor=north west] {\scalebox{1.5}{\textcolor[rgb]{0.00,0.07,1.00}{\textbf{B}}}};
\draw (8.1+2-4,2.9+1-1) node[anchor=north west] {\scalebox{1.5}{\textcolor[rgb]{0.00,0.07,1.00}{\textbf{B}}}};
\draw (8.1+2-2,2.9+1-1) node[anchor=north west] {\scalebox{1.5}{\textcolor[rgb]{0.00,0.07,1.00}{\textbf{B}}}};
\draw (8.1+1-2,2.9-1) node[anchor=north west] {\scalebox{1.5}{\textcolor[rgb]{0.00,0.07,1.00}{\textbf{B}}}};
\draw (8.1-1-2,2.9-1) node[anchor=north west] {\scalebox{1.5}{\textcolor[rgb]{0.00,0.07,1.00}{\textbf{B}}}};
\draw (8.1-3-2,2.9-1) node[anchor=north west] {\scalebox{1.5}{\textcolor[rgb]{0.00,0.07,1.00}{\textbf{B}}}};
\draw (8.1+2-7,2.9+1) node[anchor=north west] {\scalebox{1.5}{\textcolor[rgb]{0.00,0.07,1.00}{\textbf{B}}}};
\draw (8.1+2-5,2.9+1) node[anchor=north west] {\scalebox{1.5}{\textcolor[rgb]{0.00,0.07,1.00}{\textbf{B}}}};
\draw (8.1+1-5,2.9+2) node[anchor=north west] {\scalebox{1.5}{\textcolor[rgb]{0.00,0.07,1.00}{\textbf{B}}}};
\draw (8.1+1-7,2.9+2) node[anchor=north west] {\scalebox{1.5}{\textcolor[rgb]{0.00,0.07,1.00}{\textbf{B}}}};
\draw (8.1+1-6,2.9+3) node[anchor=north west] {\scalebox{1.5}{\textcolor[rgb]{0.00,0.07,1.00}{\textbf{B}}}};
\draw (8.1+3-6,2.9+3) node[anchor=north west] {\scalebox{1.5}{\textcolor[rgb]{0.00,0.07,1.00}{\textbf{B}}}};
\draw (8.1+6-1,2.9+3) node[anchor=north west] {\scalebox{1.5}{\textcolor[rgb]{0.00,0.07,1.00}{\textbf{B}}}};
\draw (8.1+5-1,2.9+2) node[anchor=north west] {\scalebox{1.5}{\textcolor[rgb]{0.00,0.07,1.00}{\textbf{B}}}};
\draw (8.1+7-1,2.9+2) node[anchor=north west] {\scalebox{1.5}{\textcolor[rgb]{0.00,0.07,1.00}{\textbf{B}}}};
\draw (8.1+6-1,2.9+1) node[anchor=north west] {\scalebox{1.5}{\textcolor[rgb]{0.00,0.07,1.00}{\textbf{B}}}};
\draw (8.1+8-1,2.9+1) node[anchor=north west] {\scalebox{1.5}{\textcolor[rgb]{0.00,0.07,1.00}{\textbf{B}}}};
\draw (8.1+9-1,2.9) node[anchor=north west] {\scalebox{1.5}{\textcolor[rgb]{0.00,0.07,1.00}{\textbf{B}}}};
\draw (8.1+7-1,2.9) node[anchor=north west] {\scalebox{1.5}{\textcolor[rgb]{0.00,0.07,1.00}{\textbf{B}}}};
\draw (8.1+5-1,2.9) node[anchor=north west] {\scalebox{1.5}{\textcolor[rgb]{0.00,0.07,1.00}{\textbf{B}}}};
\draw (8.1+6-1,2.9-1) node[anchor=north west] {\scalebox{1.5}{\textcolor[rgb]{0.00,0.07,1.00}{\textbf{B}}}};
\draw (8.1+8-1,2.9-1) node[anchor=north west] {\scalebox{1.5}{\textcolor[rgb]{0.00,0.07,1.00}{\textbf{B}}}};

\draw (8.1+3-6,2.9+1-1) node[anchor=north west]  {\scalebox{1.5}{\textcolor[rgb]{1.00,0.00,0.00}{\textbf{R}}}};
\draw (8.1-6+1,2.9+1-1) node[anchor=north west]  {\scalebox{1.5}{\textcolor[rgb]{1.00,0.00,0.00}{\textbf{R}}}};
\draw (8.1+3-4,2.9+1-1) node[anchor=north west]  {\scalebox{1.5}{\textcolor[rgb]{1.00,0.00,0.00}{\textbf{R}}}};
\draw (8.1-2,2.9-1) node[anchor=north west] {\scalebox{1.5}{\textcolor[rgb]{1.00,0.00,0.00}{\textbf{R}}}};
\draw (8.1-4,2.9-1) node[anchor=north west] {\scalebox{1.5}{\textcolor[rgb]{1.00,0.00,0.00}{\textbf{R}}}};
\draw (8.1,2.9-1) node[anchor=north west] {\scalebox{1.5}{\textcolor[rgb]{1.00,0.00,0.00}{\textbf{R}}}};
\draw (8.1+3-7,2.9+1) node[anchor=north west]  {\scalebox{1.5}{\textcolor[rgb]{1.00,0.00,0.00}{\textbf{R}}}};
\draw (8.1+1-7,2.9+1) node[anchor=north west]  {\scalebox{1.5}{\textcolor[rgb]{1.00,0.00,0.00}{\textbf{R}}}};
\draw (8.1+1-6,2.9+2) node[anchor=north west]  {\scalebox{1.5}{\textcolor[rgb]{1.00,0.00,0.00}{\textbf{R}}}};
\draw (8.1+1-4,2.9+2) node[anchor=north west]  {\scalebox{1.5}{\textcolor[rgb]{1.00,0.00,0.00}{\textbf{R}}}};
\draw (8.1-4,2.9+3) node[anchor=north west]  {\scalebox{1.5}{\textcolor[rgb]{1.00,0.00,0.00}{\textbf{R}}}};
\draw (8.1-6,2.9+3) node[anchor=north west]  {\scalebox{1.5}{\textcolor[rgb]{1.00,0.00,0.00}{\textbf{R}}}};
\draw (8.1+4,2.9+3) node[anchor=north west]  {\scalebox{1.5}{\textcolor[rgb]{1.00,0.00,0.00}{\textbf{R}}}};
\draw (8.1+6,2.9+3) node[anchor=north west]  {\scalebox{1.5}{\textcolor[rgb]{1.00,0.00,0.00}{\textbf{R}}}};
\draw (8.1+5,2.9+2) node[anchor=north west]  {\scalebox{1.5}{\textcolor[rgb]{1.00,0.00,0.00}{\textbf{R}}}};
\draw (8.1+4,2.9+1) node[anchor=north west]  {\scalebox{1.5}{\textcolor[rgb]{1.00,0.00,0.00}{\textbf{R}}}};
\draw (8.1+6,2.9+1) node[anchor=north west]  {\scalebox{1.5}{\textcolor[rgb]{1.00,0.00,0.00}{\textbf{R}}}};
\draw (8.1+8,2.9+1) node[anchor=north west]  {\scalebox{1.5}{\textcolor[rgb]{1.00,0.00,0.00}{\textbf{R}}}};
\draw (8.1+5,2.9) node[anchor=north west]  {\scalebox{1.5}{\textcolor[rgb]{1.00,0.00,0.00}{\textbf{R}}}};
\draw (8.1+7,2.9) node[anchor=north west]  {\scalebox{1.5}{\textcolor[rgb]{1.00,0.00,0.00}{\textbf{R}}}};
\draw (8.1+6,2.9-1) node[anchor=north west]  {\scalebox{1.5}{\textcolor[rgb]{1.00,0.00,0.00}{\textbf{R}}}};
\draw (8.1+8,2.9-1) node[anchor=north west]  {\scalebox{1.5}{\textcolor[rgb]{1.00,0.00,0.00}{\textbf{R}}}};

\fill[line width=0.pt,fill=black,fill opacity=1.0] (3.,2.) -- (2.,2.) -- (2.,1.) -- (3.,1.) -- cycle;
\fill[line width=0.pt,fill=black,fill opacity=1.0] (13.,2.) -- (12.,2.) -- (12.,1.) -- (13.,1.) -- cycle;
\draw [line width=4pt] (2.,1.)-- (2.,6.);
\draw [line width=4pt] (2.,6.)-- (5.,6.);
\draw [line width=4pt] (2.,1.)-- (5.,1.);
\draw [line width=2.pt] (5.,1.)-- (5.,6.);
\draw [line width=2.pt] (2.,2.)-- (5.,2.);
\draw [line width=4pt] (2.,3.)-- (5.,3.);
\draw [line width=2.pt] (2.,4.)-- (5.,4.);
\draw [line width=2.pt] (2.,5.)-- (5.,5.);
\draw [line width=2.pt] (3.,1.)-- (3.,6.);
\draw [line width=2.pt] (4.,1.)-- (4.,6.);
\draw [line width=4pt] (5.,6.)-- (6.,6.);
\draw [line width=4pt] (6.,6.)-- (6.,1.);
\draw [line width=4pt] (6.,1.)-- (5.,1.);
\draw [line width=2.pt] (5.,2.)-- (6.,2.);
\draw [line width=4pt] (5.,3.)-- (6.,3.);
\draw [line width=2.pt] (5.,4.)-- (6.,4.);
\draw [line width=2.pt] (5.,5.)-- (6.,5.);
\draw [line width=4pt] (9.,3.)-- (9.,1.);
\draw [line width=4pt] (9.,3.)-- (6.,3.);
\draw [line width=4pt] (6.,1.)-- (9.,1.);
\draw [line width=2.pt] (9.,2.)-- (6.,2.);
\draw [line width=2.pt] (7.,3.)-- (7.,1.);
\draw [line width=2.pt] (8.,3.)-- (8.,1.);
\draw [line width=2.pt,dash pattern=on 3pt off 3pt] (2.,9.)-- (10.,1.);
\draw [line width=4pt] (12.,1.)-- (12.,6.);
\draw [line width=4pt] (12.,6.)-- (15.,6.);
\draw [line width=4pt] (15.,6.)-- (15.,1.);
\draw [line width=4pt] (12.,1.)-- (17.,1.);
\draw [line width=4pt] (17.,1.)-- (17.,4.);
\draw [line width=4pt] (17.,4.)-- (12.,4.);
\draw [line width=2.pt] (13.,1.)-- (13.,6.);
\draw [line width=2.pt] (14.,1.)-- (14.,6.);
\draw [line width=2.pt] (17.,3.)-- (12.,3.);
\draw [line width=2.pt] (12.,2.)-- (17.,2.);
\draw [line width=2.pt] (12.,5.)-- (15.,5.);
\draw [line width=2.pt] (16.,4.)-- (16.,1.);
\draw [line width=2.pt,dash pattern=on 3pt off 3pt] (12.,8.)-- (19.,1.);
\end{tikzpicture}}
\caption{Items iiia and iiib of Theorem \ref{thm:patterns}. On both the left and the right, there are two chocolates overlapping.}
\label{fig:fig5}
\end{center}
\end{figure}
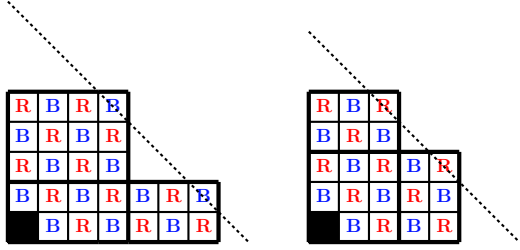

\begin{proof}[Proof of Theorem \ref{thm:patterns}]
We will prove only the first case. Although tedious, the proofs of the various cases follow the traditional mathematical induction, all of them being totally similar and not particularly difficult.

Let $n$, $m$ and $m'$ be nonnegative integers such that $n+m$ and $n+m'$ are even, and $m\geqslant m'$. Let us analyze the disjunctive sum $(n,m)-(n,m')$. If Right moves first to $(n,m)-(n,m'-2k)$, Left answers with the option $(n,m'-2k)-(n,m'-2k)=0$ and wins. If Right moves to $(n,m)-(n-2k,m')$, Left answers with  $(n-2k,m)-(n-2k,m')$ and, by induction (i1a), Left wins. If Right moves to $(n,m-(2k + 1))-(n, m')$, and $m-(2k+1)> m'$,\linebreak Left answers with $(n, m')-(n, m')=0$ and wins. If Right moves first to $(n,m-(2k + 1))-(n, m')$, and $m-(2k+1)<m'$, Left answers with $(n,m-(2k + 1))-(n,m-(2k + 1))=0$ and wins. If Right moves to $(n-(2k+1),m)-(n,m')$, and $m-(2k+1)> m'$, Left answers with the option $(n-(2k+2),m)-(n,m')$. In that case, by induction (iiib), $(n-(2k+2),m)=(n,m-(2k+2))$, and, so, $(n-(2k+2),m)\succcurlyeq (n,m')$ (i1a), and Left wins. If Right moves to $(n-(2k+1),m)-(n,m')$, and $m'> m-(2k+1)\geqslant 0$, Left answers with $(n-(2k+1),m)-(n,m-(2k+1))$. By induction (iiia), this sum is equal to zero, and Left wins. If Right moves to $(n-(2k+1),m)-(n,m')$, and $m'\geqslant 0>m-(2k+1)$, Left answers with the option $(n-(2k+1),m)-(n-m'+m-(2k+1),m')$. By induction (iiia), this sum is equal to zero and Left wins. In all cases, Left wins playing second. Hence, $(n,m)\succcurlyeq (n,m')$.
\end{proof}

Given a {\sc partisan chocolate game} position, we say that a pair of Left and Right options is \emph{concordant} if both are made by vertical cuts or if both are made by horizontal cuts. If that is not the case, one player makes a vertical cut while the other makes a horizontal cut, and the pair is \emph{discordant}.

Returning to the $(2,3)$--position shown in Figure \ref{fig:fig1}, the pair of options  $G^{L_1}$ and $G^{R_1}$ are concordant. It is easy to see that, in this case, one of the options can be reached from the other. In other words, a concordant pair trivially satisfies the F-loss Property. Figure \ref{fig:fig6} illustrates this fact.

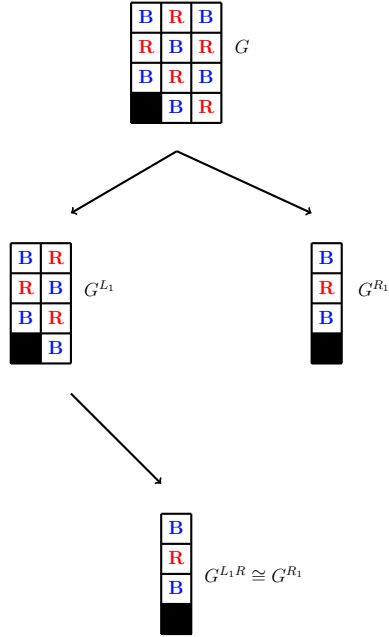
\begin{figure}[htb!]
\begin{center}
\scalebox{0.4}{
\begin{tikzpicture}
\clip(-0.5,-8.5) rectangle (13.2,13.5);
\fill[line width=0.pt,fill=black,fill opacity=1.0] (4.,9.) -- (4.,10.) -- (5.,10.) -- (5.,9.) -- cycle;
\fill[line width=0.pt,fill=black,fill opacity=1.0] (0.,1.) -- (0.,2.) -- (1.,2.) -- (1.,1.) -- cycle;
\fill[line width=0.pt,fill=black,fill opacity=1.0] (5.,-8.) -- (5.,-7.) -- (6.,-7.) -- (6.,-8.) -- cycle;
\fill[line width=0.pt,fill=black,fill opacity=1.0] (10.,1.) -- (10.,2.) -- (11.,2.) -- (11.,1.) -- cycle;
\draw [line width=2.pt] (4.,9.)-- (4.,13.);
\draw [line width=2.pt] (4.,13.)-- (7.,13.);
\draw [line width=2.pt] (4.,9.)-- (7.,9.);
\draw [line width=2.pt] (7.,9.)-- (7.,13.);
\draw [line width=2.pt] (4.,12.)-- (7.,12.);
\draw [line width=2.pt] (4.,11.)-- (7.,11.);
\draw [line width=2.pt] (4.,10.)-- (7.,10.);
\draw [line width=2.pt] (5.,13.)-- (5.,9.);
\draw [line width=2.pt] (6.,13.)-- (6.,9.);
\draw [->,line width=2.pt] (5.52,8.06) -- (2.,6.);
\draw [->,line width=2.pt] (5.52,8.06) -- (10.,6.);
\draw [line width=2.pt] (0.,1.)-- (0.,5.);
\draw [line width=2.pt] (1.,5.)-- (1.,1.);
\draw [line width=2.pt] (2.,5.)-- (2.,1.);
\draw [line width=2.pt] (0.,5.)-- (2.,5.);
\draw [line width=2.pt] (0.,4.)-- (2.,4.);
\draw [line width=2.pt] (0.,3.)-- (2.,3.);
\draw [line width=2.pt] (0.,2.)-- (2.,2.);
\draw [line width=2.pt] (0.,1.)-- (2.,1.);
\draw [line width=2.pt] (10.,5.)-- (10.,1.);
\draw [line width=2.pt] (10.,1.)-- (11.,1.);
\draw [line width=2.pt] (11.,1.)-- (11.,5.);
\draw [line width=2.pt] (11.,5.)-- (10.,5.);
\draw [line width=2.pt] (10.,4.)-- (11.,4.);
\draw [line width=2.pt] (10.,3.)-- (11.,3.);
\draw [line width=2.pt] (10.,2.)-- (11.,2.);
\draw [->,line width=2.pt] (2.,0.) -- (5.,-3.);
\draw [line width=2.pt] (5.,-4.)-- (6.,-4.);
\draw [line width=2.pt] (5.,-4.)-- (5.,-8.);
\draw [line width=2.pt] (5.,-8.)-- (6.,-8.);
\draw [line width=2.pt] (6.,-8.)-- (6.,-4.);
\draw [line width=2.pt] (5.,-5.)-- (6.,-5.);
\draw [line width=2.pt] (5.,-6.)-- (6.,-6.);
\draw [line width=2.pt] (5.,-7.)-- (6.,-7.);

\draw (8.1-0.8,2.9+9) node[anchor=north west] {\scalebox{1.5}{$G$}};
\draw (8.1-5.8,2.9+1) node[anchor=north west] {\scalebox{1.5}{$G^{L_1}$}};
\draw (8.1+3.3,2.9+1) node[anchor=north west] {\scalebox{1.5}{$G^{R_1}$}};
\draw (8.1-1.8,2.9-8.5) node[anchor=north west] {\scalebox{1.5}{$G^{L_1R}\cong G^{R_1}$}};

\draw (8.1-8,2.9+2) node[anchor=north west] {\scalebox{1.5}{\textcolor[rgb]{0.00,0.07,1.00}{\textbf{B}}}};
\draw (8.1-7,2.9+1) node[anchor=north west] {\scalebox{1.5}{\textcolor[rgb]{0.00,0.07,1.00}{\textbf{B}}}};
\draw (8.1-8,2.9) node[anchor=north west] {\scalebox{1.5}{\textcolor[rgb]{0.00,0.07,1.00}{\textbf{B}}}};
\draw (8.1-7,2.9-1) node[anchor=north west] {\scalebox{1.5}{\textcolor[rgb]{0.00,0.07,1.00}{\textbf{B}}}};
\draw (8.1+2,2.9+2) node[anchor=north west] {\scalebox{1.5}{\textcolor[rgb]{0.00,0.07,1.00}{\textbf{B}}}};
\draw (8.1+2,2.9) node[anchor=north west] {\scalebox{1.5}{\textcolor[rgb]{0.00,0.07,1.00}{\textbf{B}}}};
\draw (8.1-3,2.9-7) node[anchor=north west] {\scalebox{1.5}{\textcolor[rgb]{0.00,0.07,1.00}{\textbf{B}}}};
\draw (8.1-3,2.9-9) node[anchor=north west] {\scalebox{1.5}{\textcolor[rgb]{0.00,0.07,1.00}{\textbf{B}}}};
\draw (8.1-4,2.9+10) node[anchor=north west] {\scalebox{1.5}{\textcolor[rgb]{0.00,0.07,1.00}{\textbf{B}}}};
\draw (8.1-4+2,2.9+10) node[anchor=north west] {\scalebox{1.5}{\textcolor[rgb]{0.00,0.07,1.00}{\textbf{B}}}};
\draw (8.1-5+2,2.9+9) node[anchor=north west] {\scalebox{1.5}{\textcolor[rgb]{0.00,0.07,1.00}{\textbf{B}}}};
\draw (8.1-6+2,2.9+8) node[anchor=north west] {\scalebox{1.5}{\textcolor[rgb]{0.00,0.07,1.00}{\textbf{B}}}};
\draw (8.1-4+2,2.9+8) node[anchor=north west] {\scalebox{1.5}{\textcolor[rgb]{0.00,0.07,1.00}{\textbf{B}}}};
\draw (8.1-5+2,2.9+7) node[anchor=north west] {\scalebox{1.5}{\textcolor[rgb]{0.00,0.07,1.00}{\textbf{B}}}};

\draw (8.1+2,2.9+1) node[anchor=north west] {\scalebox{1.5}{\textcolor[rgb]{1.00,0.00,0.00}{\textbf{R}}}};
\draw (8.1-8,2.9+1) node[anchor=north west] {\scalebox{1.5}{\textcolor[rgb]{1.00,0.00,0.00}{\textbf{R}}}};
\draw (8.1-7,2.9+2) node[anchor=north west] {\scalebox{1.5}{\textcolor[rgb]{1.00,0.00,0.00}{\textbf{R}}}};
\draw (8.1-7,2.9+2-2) node[anchor=north west] {\scalebox{1.5}{\textcolor[rgb]{1.00,0.00,0.00}{\textbf{R}}}};
\draw (8.1-3,2.9+2-10) node[anchor=north west] {\scalebox{1.5}{\textcolor[rgb]{1.00,0.00,0.00}{\textbf{R}}}};
\draw (8.1-3+1,2.9+7) node[anchor=north west] {\scalebox{1.5}{\textcolor[rgb]{1.00,0.00,0.00}{\textbf{R}}}};
\draw (8.1-3,2.9+8) node[anchor=north west] {\scalebox{1.5}{\textcolor[rgb]{1.00,0.00,0.00}{\textbf{R}}}};
\draw (8.1-4,2.9+9) node[anchor=north west] {\scalebox{1.5}{\textcolor[rgb]{1.00,0.00,0.00}{\textbf{R}}}};
\draw (8.1-4+2,2.9+9) node[anchor=north west] {\scalebox{1.5}{\textcolor[rgb]{1.00,0.00,0.00}{\textbf{R}}}};
\draw (8.1-4+1,2.9+10) node[anchor=north west] {\scalebox{1.5}{\textcolor[rgb]{1.00,0.00,0.00}{\textbf{R}}}};

\end{tikzpicture}}
\caption{Concordant options trivially satisfy the F--loss Property.}
\label{fig:fig6}
\end{center}
\end{figure}

On the other hand, the pair of options $G^{L_1}$ and $G^{R_2}$ are discordant. In this case, the fact that the F--loss property holds is no longer as straightforward. However, it is possible to observe that one of the players can find a cut in the chocolate bar that makes use of Theorem \ref{thm:patterns}. Figure \ref{fig:fig7} illustrates this fact. The precise method for doing so in all cases is described in the proof of the following theorem. 

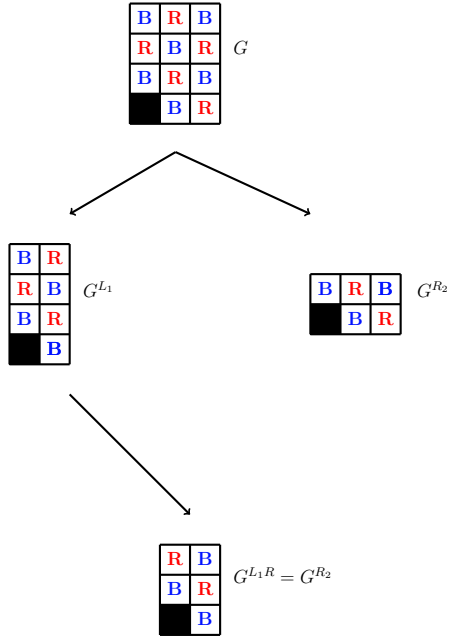
\begin{figure}[htb!]
\begin{center}
\scalebox{0.4}{
\begin{tikzpicture}
\clip(-0.5,-8.5) rectangle (15.2,13.5);
\fill[line width=0.pt,fill=black,fill opacity=1.0] (4.,9.) -- (4.,10.) -- (5.,10.) -- (5.,9.) -- cycle;
\fill[line width=0.pt,fill=black,fill opacity=1.0] (0.,1.) -- (0.,2.) -- (1.,2.) -- (1.,1.) -- cycle;
\fill[line width=0.pt,fill=black,fill opacity=1.0] (5.,-8.) -- (5.,-7.) -- (6.,-7.) -- (6.,-8.) -- cycle;
\fill[line width=0.pt,fill=black,fill opacity=1.0] (10.,1.+1) -- (10.,2.+1) -- (11.,2.+1) -- (11.,1.+1) -- cycle;
\draw [line width=2.pt] (4.,9.)-- (4.,13.);
\draw [line width=2.pt] (4.,13.)-- (7.,13.);
\draw [line width=2.pt] (4.,9.)-- (7.,9.);
\draw [line width=2.pt] (7.,9.)-- (7.,13.);
\draw [line width=2.pt] (4.,12.)-- (7.,12.);
\draw [line width=2.pt] (4.,11.)-- (7.,11.);
\draw [line width=2.pt] (4.,10.)-- (7.,10.);
\draw [line width=2.pt] (5.,13.)-- (5.,9.);
\draw [line width=2.pt] (6.,13.)-- (6.,9.);
\draw [->,line width=2.pt] (5.52,8.06) -- (2.,6.);
\draw [->,line width=2.pt] (5.52,8.06) -- (10.,6.);
\draw [line width=2.pt] (0.,1.)-- (0.,5.);
\draw [line width=2.pt] (1.,5.)-- (1.,1.);
\draw [line width=2.pt] (2.,5.)-- (2.,1.);
\draw [line width=2.pt] (0.,5.)-- (2.,5.);
\draw [line width=2.pt] (0.,4.)-- (2.,4.);
\draw [line width=2.pt] (0.,3.)-- (2.,3.);
\draw [line width=2.pt] (0.,2.)-- (2.,2.);
\draw [line width=2.pt] (0.,1.)-- (2.,1.);
\draw [line width=2.pt] (10.,4.)-- (10.,2.);
\draw [line width=2.pt] (11.,2.)-- (11.,4.);
\draw [line width=2.pt] (12.,2.)-- (12.,4.);
\draw [line width=2.pt] (13.,2.)-- (13.,4.);
\draw [line width=2.pt] (10.,4.)-- (13.,4.);
\draw [line width=2.pt] (10.,3.)-- (13.,3.);
\draw [line width=2.pt] (10.,2.)-- (13.,2.);
\draw [->,line width=2.pt] (2.,0.) -- (6.,-4.);
\draw [line width=2.pt] (5.,-5.)-- (5.,-8.);
\draw [line width=2.pt] (5.,-8.)-- (7.,-8.);
\draw [line width=2.pt] (6.,-8.)-- (6.,-5.);
\draw [line width=2.pt] (5.,-5.)-- (7.,-5.);
\draw [line width=2.pt] (5.,-6.)-- (7.,-6.);
\draw [line width=2.pt] (5.,-7.)-- (7.,-7.);
\draw [line width=2.pt] (7.,-8.)-- (7.,-5.);

\draw (8.1-0.8,2.9+9) node[anchor=north west] {\scalebox{1.5}{$G$}};
\draw (8.1-5.8,2.9+1) node[anchor=north west] {\scalebox{1.5}{$G^{L_1}$}};
\draw (8.1+5.3,2.9+1) node[anchor=north west] {\scalebox{1.5}{$G^{R_2}$}};
\draw (8.1-0.8,2.9-8.5) node[anchor=north west] {\scalebox{1.5}{$G^{L_1R}= G^{R_2}$}};

\draw (8.1-8,2.9+2) node[anchor=north west] {\scalebox{1.5}{\textcolor[rgb]{0.00,0.07,1.00}{\textbf{B}}}};
\draw (8.1-7,2.9+1) node[anchor=north west] {\scalebox{1.5}{\textcolor[rgb]{0.00,0.07,1.00}{\textbf{B}}}};
\draw (8.1-8,2.9) node[anchor=north west] {\scalebox{1.5}{\textcolor[rgb]{0.00,0.07,1.00}{\textbf{B}}}};
\draw (8.1-7,2.9-1) node[anchor=north west] {\scalebox{1.5}{\textcolor[rgb]{0.00,0.07,1.00}{\textbf{B}}}};
\draw (8.1-7,2.9-1) node[anchor=north west] {\scalebox{1.5}{\textcolor[rgb]{0.00,0.07,1.00}{\textbf{B}}}};
\draw (8.1+4,2.9+1) node[anchor=north west] {\scalebox{1.5}{\textcolor[rgb]{0.00,0.07,1.00}{\textbf{B}}}};
\draw (8.1+2,2.9+1) node[anchor=north west] {\scalebox{1.5}{\textcolor[rgb]{0.00,0.07,1.00}{\textbf{B}}}};
\draw (8.1+4,2.9+1) node[anchor=north west] {\scalebox{1.5}{\textcolor[rgb]{0.00,0.07,1.00}{\textbf{B}}}};
\draw (8.1+3,2.9) node[anchor=north west] {\scalebox{1.5}{\textcolor[rgb]{0.00,0.07,1.00}{\textbf{B}}}};
\draw (8.1-2,2.9-8) node[anchor=north west] {\scalebox{1.5}{\textcolor[rgb]{0.00,0.07,1.00}{\textbf{B}}}};
\draw (8.1-2,2.9-10) node[anchor=north west] {\scalebox{1.5}{\textcolor[rgb]{0.00,0.07,1.00}{\textbf{B}}}};
\draw (8.1-3,2.9-9) node[anchor=north west] {\scalebox{1.5}{\textcolor[rgb]{0.00,0.07,1.00}{\textbf{B}}}};
\draw (8.1-4,2.9+10) node[anchor=north west] {\scalebox{1.5}{\textcolor[rgb]{0.00,0.07,1.00}{\textbf{B}}}};
\draw (8.1-4+2,2.9+10) node[anchor=north west] {\scalebox{1.5}{\textcolor[rgb]{0.00,0.07,1.00}{\textbf{B}}}};
\draw (8.1-5+2,2.9+9) node[anchor=north west] {\scalebox{1.5}{\textcolor[rgb]{0.00,0.07,1.00}{\textbf{B}}}};
\draw (8.1-6+2,2.9+8) node[anchor=north west] {\scalebox{1.5}{\textcolor[rgb]{0.00,0.07,1.00}{\textbf{B}}}};
\draw (8.1-4+2,2.9+8) node[anchor=north west] {\scalebox{1.5}{\textcolor[rgb]{0.00,0.07,1.00}{\textbf{B}}}};
\draw (8.1-5+2,2.9+7) node[anchor=north west] {\scalebox{1.5}{\textcolor[rgb]{0.00,0.07,1.00}{\textbf{B}}}};

\draw (8.1+3,2.9+1) node[anchor=north west] {\scalebox{1.5}{\textcolor[rgb]{1.00,0.00,0.00}{\textbf{R}}}};

\draw (8.1+4,2.9) node[anchor=north west] {\scalebox{1.5}{\textcolor[rgb]{1.00,0.00,0.00}{\textbf{R}}}};
\draw (8.1-8,2.9+1) node[anchor=north west] {\scalebox{1.5}{\textcolor[rgb]{1.00,0.00,0.00}{\textbf{R}}}};
\draw (8.1-7,2.9+2) node[anchor=north west] {\scalebox{1.5}{\textcolor[rgb]{1.00,0.00,0.00}{\textbf{R}}}};
\draw (8.1-7,2.9+2-2) node[anchor=north west] {\scalebox{1.5}{\textcolor[rgb]{1.00,0.00,0.00}{\textbf{R}}}};
\draw (8.1-3,2.9+2-10) node[anchor=north west] {\scalebox{1.5}{\textcolor[rgb]{1.00,0.00,0.00}{\textbf{R}}}};
\draw (8.1-2,2.9+2-11) node[anchor=north west] {\scalebox{1.5}{\textcolor[rgb]{1.00,0.00,0.00}{\textbf{R}}}};
\draw (8.1-3+1,2.9+7) node[anchor=north west] {\scalebox{1.5}{\textcolor[rgb]{1.00,0.00,0.00}{\textbf{R}}}};
\draw (8.1-3,2.9+8) node[anchor=north west] {\scalebox{1.5}{\textcolor[rgb]{1.00,0.00,0.00}{\textbf{R}}}};
\draw (8.1-4,2.9+9) node[anchor=north west] {\scalebox{1.5}{\textcolor[rgb]{1.00,0.00,0.00}{\textbf{R}}}};
\draw (8.1-4+2,2.9+9) node[anchor=north west] {\scalebox{1.5}{\textcolor[rgb]{1.00,0.00,0.00}{\textbf{R}}}};
\draw (8.1-4+1,2.9+10) node[anchor=north west] {\scalebox{1.5}{\textcolor[rgb]{1.00,0.00,0.00}{\textbf{R}}}};

\end{tikzpicture}}
\caption{Discordant options satisfy the F-loss Property with the use of Theorem \ref{thm:patterns}.}
\label{fig:fig7}
\end{center}
\end{figure}

\begin{theorem}\label{thm:numbers}
The game values of {\sc partisan chocolate game} positions are numbers.
\end{theorem}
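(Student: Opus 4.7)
The plan is to apply the F-loss criterion of~\cite{Car21}. The set $S$ of all {\sc partisan chocolate game} positions is an HCR, since every option of a chocolate bar is a smaller chocolate bar still in $S$. So it suffices to verify, for every $G=(n,m) \in S$ and every pair $(G^L, G^R) \in G^{\mathcal{L}} \times G^{\mathcal{R}}$, that the F-loss property holds.

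I would split the verification into concordant and discordant pairs. In the concordant case (both $G^L$ and $G^R$ are vertical cuts, or both are horizontal), whichever of the two cuts is the shorter can be reached directly from the other option. For instance, if $G^L=(i,m)$ and $G^R=(j,m)$ with $i<j$, then Left from $G^R$ performs the same vertical cut that produced $G^L$, giving $G^{RL}=G^L$; the case $i>j$ gives $G^{LR}=G^R$, and the all-horizontal subcase is analogous. Concordant pairs therefore satisfy F-loss immediately, and the substantive work lies in the discordant case.

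For a discordant pair, I may assume by symmetry that $G^L=(i,m)$ is Left's vertical cut ($i+m$ even, $i<n$) and $G^R=(n,k')$ is Right's horizontal cut ($n+k'$ odd, $k'<m$). Since $n+k'$ and $i+m$ differ in parity, they are never equal, and the proof proceeds by a case analysis on their comparison and on the parity of $n+m$. If $n+k'<i+m$, Right's horizontal cut of $G^L$ to $(i,n+k'-i)$ coincides with $G^R$ by Theorem~\ref{thm:patterns}(iiia). If $n+k'>i+m$ with $n+m$ odd, Left's vertical cut of $G^R$ to $(i+m-k',k')$ coincides with $G^L$ by (iiib). If $n+k'>i+m$ with $n+m$ even and $n \leq i+m$, Left's horizontal cut of $G^R$ to $(n,i+m-n)$ coincides with $G^L$, again by (iiib). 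If $n+k'>i+m$, $n+m$ is even, $n>i+m$, and $k' \geq 1$, the Left option $(n,k'-1)$ of $G^R$ satisfies $(n,k'-1) \succcurlyeq G^L$ via (iiib) followed by (i1a). In all of these subcases the matching option is produced explicitly from a pattern of Theorem~\ref{thm:patterns}.

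The hard part will be the remaining subcase: $k'=0$, which forces $n,i,m$ all odd and $n \geq i+m+1$. Here $G^R=(n,0)$ admits only vertical Left options $(l,0)$ with $l$ even, and each such $(l,0)$ has column-parity different from $G^L=(i,m)$, so the diagonal equalities of Theorem~\ref{thm:patterns} do not apply. My plan is to establish the required inequality $(n-1,0) \succcurlyeq (i,m)$ by a dedicated argument on the disjunctive sum, leveraging the inductive hypothesis that all strictly smaller chocolate bars are numbers (so their canonical forms are available for the comparison) together with chained applications of the patterns in Theorem~\ref{thm:patterns}, such as using (iib) to relate $(i,m)$ to an intermediate odd-sum bar of the same row and then reducing via (iiia). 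Once F-loss is verified in every case, the conclusion that every {\sc partisan chocolate game} position is a number follows directly from~\cite{Car21}.
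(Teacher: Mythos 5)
Your overall strategy is exactly the paper's: verify the F--loss property, dispose of concordant pairs by observing that one option reaches the other, and handle discordant pairs by a case analysis that manufactures the required $G^{LR}$ or $G^{RL}$ from the patterns of Theorem \ref{thm:patterns}. Your first four discordant subcases are correct and line up with the paper's (modulo the harmless slip of citing i1a where you need i1b, since you are comparing bars with the same second coordinate), and your parametrization by the diagonal index $n+k'$ versus $i+m$ is if anything a little cleaner than the paper's $k,k'$ bookkeeping.

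The gap is in your final subcase ($k'=0$, so $n,i,m$ odd and $n\geqslant i+m+1$), and it is a genuine one: the route you sketch cannot work. Applying (iib) to $(i,m)$ gives an odd-sum bar \emph{above} it, e.g.\ $(i+1,m)\succcurlyeq(i,m)$, and (iiia) turns that into $(i+m+1,0)\succcurlyeq(i,m)$; but (iia) also gives $(i+m+1,0)\succcurlyeq(n-1,0)$, so both sides of your target inequality are bounded above by the same position and nothing follows. The structural obstruction is that Theorem \ref{thm:patterns} partitions the red-cornered bars on each diagonal into two classes --- both coordinates odd versus both coordinates even --- and provides no comparison between the classes except via odd-sum bars sitting above both. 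Since $(i,m)$ is in the both-odd class and every Left option $(l,0)$ of $(n,0)$ is in the both-even class, no chain of the listed patterns reaches $(n-1,0)\succcurlyeq(i,m)$. What is missing is precisely the auxiliary equality the paper proves as a separate induction before the case analysis, namely $(2j,0)=(2j+1,1)$: with it, $(i,m)=(i+m-1,1)$ by (iiib), $=(i+m-2,0)$ by the Claim, and then $(n-1,0)\succcurlyeq(i+m-2,0)$ by (i1b), closing the case. Your "dedicated argument on the disjunctive sum" would have to prove this Claim (or an equivalent bridge between the two classes); as written, the plan does not contain it.
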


\begin{proof}
Let $G$ be a {\sc partisan chocolate game} position. Naturally, a pair $(G^L, G^R)$ is either obtained through parallel cuts or perpendicular cuts. If the cuts are concordant (parallel cuts), the pair trivially satisfies the F-loss Property.

To analyze the case where the options are discordant (perpendicular cuts), let us start by proving the simple Claim that states $(2n,0)=(2n+1,1)$. If $n=0$, both $(0,0)$ and $(1,1)$ are $\mathcal{P}$-positions with a game value of zero. For the remaining cases, the game $(2n,0)-(2n+1,1)$ is a $\mathcal{P}$-position because $(2n-(2j+1),0)-(2n+1-(2j+3),1)$ and $(2n-2j,0)-(2n+1-2j,1)$ are $\mathcal{P}$-positions (the first is by Theorem \ref{thm:patterns} iiia and the second is by induction). Therefore, apart from the possible first moves by Left to $(2n,0)-(2n,1)$ or to $(2n,0)-(2n+1,0)$, which have the trivial winning response by Right to $(2n,0)-(2n,0)$, all other initial moves by either player have an available $\mathcal{P}$-position as a response.

If the cuts are discordant, without loss of generality, let us assume that Left cuts vertically and Right cuts horizontally. Let us first consider the case where $G=(n,m)$ with $n+m$ even (the top-right square is red). Consider $G^L=(n-k,m)$ and $G^R=(n,m-k')$, where $k$ is even and $k'$ is odd.

Firstly, suppose that $k'-k\geqslant1$ (that is, $-k-1\geqslant-k'$). Let\linebreak $G^{LR}=(n-k,m-1)$, option that is available, since $m\geqslant1$ to allow Right to make a horizontal cut. We have
$$G^{LR}=(n-k,m-1)\underbrace{=}_{Theorem \ref{thm:patterns},\text{ iiia}}(n,m-k-1)\underbrace{\preccurlyeq}_{Theorem \ref{thm:patterns},\text{ i2a}}(n,m-k')=G^{R}.$$
Secondly, suppose that $k'-k<1$ (that is, $k-k'+1>0$) and $m-k'\geqslant 1$. Let $G^{RL}=(n,m-k'-1)$. We have
$$G^L=(n-k,m)\underbrace{\preccurlyeq}_{Theorem \ref{thm:patterns},\text{ i1a}}(n-k,m+k-k'-1)\underbrace{=}_{Theorem \ref{thm:patterns},\text{ iiib}}(n,m-k'-1)=G^{RL}.$$
Thirdly, suppose that $k'-k<1$ and $m-k'=0$ ($m$ and $n$ are odd). Let $G^{RL}=(n-k+m-2,0)$,  option that is available, since $n$ is odd, and consequently $n-k\geqslant 1$. We have
$$G^L=(n-k,m)\underbrace{=}_{Theorem \ref{thm:patterns},\text{ iiib}}(n-k+m-1,1)\underbrace{=}_{Claim}(n-k+m-2,0)=G^{RL}.$$

Let us consider now the case where $G=(n,m)$ with $n+m$ odd (the top-right square is blue). Consider $G^L=(n-k,m)$ and $G^R=(n,m-k')$, where $k$ is odd and $k'$ is even.

Firstly, suppose that $k-k'>0$. Of course, in this case we have that $n-(k-k')\geqslant n-k\geqslant 0$; according to this, let $G^{RL}=(n-(k-k'),m-k')$. We have
$$G^{L}=(n-k,m)\underbrace{=}_{Theorem \ref{thm:patterns},\text{ iiib}}(n-(k-k'),m-k')=G^{RL}.$$
Secondly, suppose that $k-k'\leqslant0$. Of course, the inequality must be strict because $k$ and $k'$ are different, i.e., $k-k'<0$. Observe that $-1-k\geqslant -k'$ and let $G^{LR}=(n-k,m-1)$. We have
$$G^{LR}=(n-k,m-1)\underbrace{=}_{Theorem \ref{thm:patterns},\text{ iiia}}(n,m-k-1)\underbrace{\preccurlyeq}_{Theorem \ref{thm:patterns},\text{ i2a}}(n,m-k')=G^{RL}.$$

In all cases, the pairs $(G^L, G^R)$  satisfy the F-loss Property. Hence, the game value of $G$ is a number.
\end{proof}

\section{Game values of {\sc partisan chocolate game} positions}
\label{sec:formula}
In order to find a closed formula for the game values of {\sc partisan chocolate game} positions, we begin with the particular case where the chocolate bar has only one column (or one line). This case can be interpreted as an alternating {\sc blue-red hackenbush} string, as stated in the following theorem.

\begin{theorem}\label{thm:isomorphic}
Let $m$ be a nonnegative integer. If $G$ is the $(0,m)$--position (or the $(m,0)$--position) and $H$ is the alternating {\sc blue-red hackenbush} string of size $m$ with a blue edge at the base when there are edges, then $G\cong H$ and $G=H_m=\frac{2^m-(-1)^m}{3\times 2^{m-1}}$.
\end{theorem}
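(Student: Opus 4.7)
The plan is to establish the isomorphism $G\cong H$ by induction on $m$, after which the closed-form value $H_m=\frac{2^m-(-1)^m}{3\cdot 2^{m-1}}$ follows from the Jacobsthal computation already recorded in Section \ref{sec:intro}.

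First I would fix notation for $(0,m)$: the single column contains squares indexed by rows $0,1,\ldots,m$ from bottom to top, with row $0$ being the poisoned square. By the rules the square adjacent to the poisoned one is blue, so row $1$ is blue; by the checkerboard pattern row $k$ is then blue precisely when $k$ is odd and red precisely when $k\ge 2$ is even. In the alternating {\sc blue-red hackenbush} string of length $m$ with a blue edge at the base, edge $k$ is likewise blue iff $k$ is odd.

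Next I would compare options. In $(0,m)$ Left may cut horizontally between row $i$ and row $i+1$ exactly when the unique square in row $i+1$ is blue, i.e.\ when $i$ is even and $0\le i\le m-1$; after eating, the remaining position is $(0,i)$. Symmetrically Right's options are the $(0,i)$ with $i$ odd and $1\le i\le m-1$. In the Hackenbush string, removing the $j$th edge (which must be blue for Left, hence $j$ odd) leaves a string of length $j-1$; as $j$ ranges over odd integers in $[1,m]$ the values $j-1$ range over exactly the even integers in $[0,m-1]$, and analogously for Right with $j$ even. Thus the Left (resp.\ Right) option sets of $(0,m)$ and of $H_m$ are indexed by the same set of lengths.

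The induction closes immediately: assuming $(0,i)\cong H_i$ for every $i<m$, the option sets of $(0,m)$ and of $H_m$ are in bijection via isomorphic subgames, so the full game trees are isomorphic and $G\cong H$. The base case $m=0$ is trivial since neither position admits a move. The $(m,0)$ case is identical after interchanging rows and columns, because the horizontal-cut rule is the exact mirror of the vertical-cut rule under that symmetry. The explicit value $H_m=\frac{2^m-(-1)^m}{3\cdot 2^{m-1}}$ is then the formula already derived in Section~\ref{sec:intro}. The only real obstacle is the bookkeeping of parities: one must verify that the edge cases $i=0$ (Left or Right eating everything above the poisoned square) and $i=m-1$ (only the topmost row is eaten) are both legal on the chocolate side and both realised by the corresponding edge-removal on the Hackenbush side, so that the two option sets really do coincide for every parity of $m$.
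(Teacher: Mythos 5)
Your proposal is correct and follows essentially the same route as the paper's proof: induction on $m$, matching the Left and Right option sets of $(0,m)$ and of the alternating string by parity of the remaining length, concluding isomorphism, and then citing the known Jacobsthal-derived formula for alternating {\sc blue-red hackenbush} strings. The extra bookkeeping you flag (coloring of row $k$ by parity, the edge cases $i=0$ and $i=m-1$) is a harmless elaboration of what the paper states more tersely.
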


\begin{proof}
If $m=0$, both the $(0,0)$--position and the {\sc blue-red hackenbush} string without edges have the game form $\{\varnothing\! \mid \! \varnothing \}$, and they are trivially isomorphic. This is the base case.

For $m>0$, the Left options of $(0,m)$ are the positions $(0,m')$ with $m'<m$ even, and the Right options of $(0,m)$ are the positions $(0,m'')$\linebreak with $m''<m$ odd. On the other hand, the Left options in the alternating {\sc blue-red hackenbush} string of size $m$ consist of alternating {\sc blue-red hackenbush} strings of size $m'$, where $m'<m$ is even, and the Right options consist of alternating {\sc blue-red hackenbush} strings of size $m''$, where $m''<m$ is odd. By induction, these options are pairwise isomorphic, making $G$ and $H$, in turn, also isomorphic.

The game value $G=H_m=\frac{2^m-(-1)^m}{3\times 2^{m-1}}$ is a straightforward consequence of the fact that the game values of alternating {\sc blue-red hackenbush} strings follow this formula.
\end{proof}

As we will see, the main result is closely related to the sequence $H_n$. To prepare for its proof, some preliminary lemmas are required, which we prove below.

\begin{lemma}\label{lem:facts}
If $n$ is a nonnegative integer, we have the following facts:
     \begin{itemize}
        \item $H_{2n}=\frac{4^n-1}{6\times 4^{n-1}}$
        \item $H_{2n+1}=\frac{2\times4^n+1}{3\times 4^{n}}$
        \item $H_{2n}<H_{2(n+1)}$
        \item $H_{2n+1}>H_{2(n+1)+1}$
    \end{itemize}
\end{lemma}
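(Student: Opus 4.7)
The plan is to derive the first two identities by direct algebraic manipulation of the closed form $H_n = \frac{2^n - (-1)^n}{3 \cdot 2^{n-1}}$, and then to obtain the two monotonicity statements as immediate corollaries by rewriting each formula in the form ``$\tfrac{2}{3} \pm$ (a positive decreasing tail).''

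For the first identity, I would substitute $n \mapsto 2n$, so that $(-1)^{2n}=1$ and $2^{2n}=4^n$; pulling a factor of $2$ out of the denominator $3 \cdot 2^{2n-1}$ gives $6 \cdot 2^{2n-2} = 6 \cdot 4^{n-1}$, yielding $H_{2n}=\frac{4^n-1}{6\cdot 4^{n-1}}$. For the second identity, substitute $n\mapsto 2n+1$, so that $(-1)^{2n+1}=-1$ and $2^{2n+1}=2\cdot 4^n$; the denominator $3\cdot 2^{2n}$ is just $3\cdot 4^n$, giving $H_{2n+1}=\frac{2\cdot 4^n+1}{3\cdot 4^n}$.

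For the two inequalities, I would rewrite the formulas just obtained as
\[
H_{2n} = \frac{2}{3} - \frac{1}{6\cdot 4^{n-1}}, \qquad H_{2n+1} = \frac{2}{3} + \frac{1}{3\cdot 4^n},
\]
both of which are routine: for the first, $\frac{4^n-1}{6\cdot 4^{n-1}} = \frac{4\cdot 4^{n-1}-1}{6\cdot 4^{n-1}} = \frac{4}{6} - \frac{1}{6\cdot 4^{n-1}}$; for the second, $\frac{2\cdot 4^n+1}{3\cdot 4^n} = \frac{2}{3} + \frac{1}{3\cdot 4^n}$. Since the subtracted term $\frac{1}{6\cdot 4^{n-1}}$ strictly decreases with $n$, the sequence $H_{2n}$ strictly increases, proving $H_{2n}<H_{2(n+1)}$; since the added term $\frac{1}{3\cdot 4^n}$ strictly decreases with $n$, the sequence $H_{2n+1}$ strictly decreases, proving $H_{2n+1}>H_{2(n+1)+1}$.

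There is no real obstacle here; the lemma is pure bookkeeping, and the slightly non-obvious step is recognizing that the common limit of both subsequences is $\tfrac{2}{3}$, which is what makes the ``$\tfrac{2}{3}\pm\text{tail}$'' rewriting the cleanest route and foreshadows that $H_n$ will play the decisive role in the formula for game values proved in the next section.
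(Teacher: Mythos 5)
Your proposal is correct and takes essentially the same route as the paper: the first two identities by direct substitution into the closed form, and the monotonicity claims by elementary algebra on those expressions (the paper computes the differences $H_{2(n+1)}-H_{2n}=\frac{1}{2\times 4^{n}}$ and $H_{2(n+1)+1}-H_{2n+1}=-\frac{1}{4^{n+1}}$ directly, which is just a cosmetic variant of your ``$\tfrac{2}{3}\pm$ decreasing tail'' rewriting). All of your computations check out.
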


\begin{proof}
The first two items result from straightforward replacements by $2n$ and $2n+1$ in $\frac{2^n-(-1)^n}{3\times 2^{n-1}}$, followed by subsequent algebraic simplifications.

The last two items follow from the first two items and the facts\linebreak $H_{2(n+1)}-H_{2n}=\frac{1}{2\times4^n}>0$ and $H_{2(n+1)+1}-H_{2n+1}=-\frac{1}{4^{n+1}}<0$.
\end{proof}

\begin{lemma}\label{lem:facts2}
If $n$ is a nonnegative integer, we have the following facts:
     \begin{itemize}
        \item $\{H_{2n}\,|\,H_{2n+1}\}=H_{2n+2}$
        \item $\{H_{2n+2}\,|\,H_{2n+1}\}=H_{2n+3}$
        \item $\{H_{2n}\,|\,H_{2n+3}\}=H_{2n+2}$
    \end{itemize}
\end{lemma}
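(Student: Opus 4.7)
The plan is to invoke the Simplicity Rule (Theorem \ref{thm:simplicity}) for each of the three identities. Every $H_k$ is a dyadic number, so each form $\{a\mid b\}$ is handled by verifying (a) $a<b$, (b) the claimed value $v$ satisfies $a<v<b$, and (c) no simpler dyadic (nor any integer) lies strictly inside the open interval $(a,b)$. The orderings in (a) and (b) will follow from Lemma \ref{lem:facts} together with the elementary arithmetic identities $H_{2n+2}-H_{2n}=H_{2n+1}-H_{2n+2}=1/2^{2n+1}$ and $H_{2n+3}-H_{2n+2}=H_{2n+1}-H_{2n+3}=1/2^{2n+2}$, obtained by direct substitution of the closed forms. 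Since every interval involved sits inside $(0,1)$, no integer can ever be interior, so only (c) requires real work.

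Condition (c) for identities $1$ and $2$ will follow from showing that the endpoints, scaled by the right power of two, become consecutive integers. Using Lemma \ref{lem:facts} one computes $H_{2n}\cdot 2^{2n}=2J_{2n}$ and $H_{2n+1}\cdot 2^{2n}=2J_{2n}+1$, where $J_k=(2^k-(-1)^k)/3$ is the Jacobsthal number. Hence no dyadic of denominator at most $2^{2n}$ can lie strictly between $H_{2n}$ and $H_{2n+1}$, while $H_{2n+2}=J_{2n+2}/2^{2n+1}$ is already in reduced form because $J_{2n+2}$ is odd. Simplicity then gives $\{H_{2n}\mid H_{2n+1}\}=H_{2n+2}$. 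The second identity is completely analogous: $H_{2n+2}\cdot 2^{2n+1}=J_{2n+2}$ and $H_{2n+1}\cdot 2^{2n+1}=J_{2n+2}+1$ are consecutive, and $H_{2n+3}=J_{2n+3}/2^{2n+2}$ is reduced with odd numerator and sits strictly between (in fact at the midpoint), giving $\{H_{2n+2}\mid H_{2n+1}\}=H_{2n+3}$.

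For identity $3$ the interval $(H_{2n},H_{2n+3})$ spans more than one dyadic step of size $1/2^{2n+1}$, so I would argue slightly differently. I compute its length as $3/2^{2n+2}$, which is strictly less than $1/2^{2n}$. Since $H_{2n}$ is itself a multiple of $1/2^{2n}$ (by the calculation above), the next such multiple is $H_{2n}+1/2^{2n}$, and this already exceeds $H_{2n+3}$ because $1/2^{2n}>3/2^{2n+2}$. Hence no dyadic of denominator at most $2^{2n}$ lies in the open interval, and $H_{2n+2}$, with reduced denominator $2^{2n+1}$, is again the simplest number between the bounds.

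Overall there is no serious conceptual obstacle: once the Simplicity Rule is identified as the right tool, the whole lemma reduces to bookkeeping. The only point worth double-checking carefully is that each claimed reduced denominator is correct, which in every case amounts to the elementary fact that $J_k$ is odd. This is immediate from the recurrence $J_{k+1}=2J_k+(-1)^k$ with $J_1=1$ recorded in the introduction.
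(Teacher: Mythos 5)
Your proposal is correct and takes essentially the same route as the paper: both arguments are direct applications of the Simplicity Rule (Theorem \ref{thm:simplicity}) to the closed forms from Lemma \ref{lem:facts}, differing only in bookkeeping (the paper locates the unique multiple of $3$ between numerators over a common denominator $3\cdot 2^k$, whereas you scale by powers of $2$ to exhibit consecutive integers and invoke the oddness of the Jacobsthal numbers). The checks you flag as needing care --- the reduced denominators and the interval lengths --- all come out as you claim.
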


\begin{proof}
All items will be proved by using the Simplicity Rule (Theorem \ref{thm:simplicity}).
     \begin{itemize}
        \item From Lemma \ref{lem:facts}, we have that
$$\{H_{2n}\,|\,H_{2n+1}\}=\left\{\frac{4^n-1}{6\times 4^{n-1}}\,\middle|\,\frac{2\times4^n+1}{3\times 4^{n}}\right\}=\left\{\frac{4^{n+1}-4}{6\times 4^{n}}\,\middle|\,\frac{4^{n+1}+2}{6\times 4^{n}}\right\}.$$
From the integers strictly between $4^{n+1}-4$ and $4^{n+1}+2$, we need to find a multiple of $3$ to obtain a dyadic strictly between the Left option and the Right option. The only multiple of $3$ under these conditions is $4^{n+1}-1$. Thus, the desired simplest number is $H_{2n+2}=\frac{4^{n+1}-1}{6\times 4^{n}}$.
        \item From Lemma \ref{lem:facts}, we have that
\begin{align*}
\{H_{2n+2}\,|\,H_{2n+1}\} &= \left\{\frac{4^{n+1}-1}{6\times 4^{n}}\,\middle|\,\frac{2\times4^{n}+1}{3\times 4^{n}}\right\} \\
&= \left\{\frac{2\times4^{n+1}-2}{12\times 4^{n}}\,\middle|\,\frac{2\times4^{n+1}+4}{12\times 4^{n}}\right\}.
\end{align*}
From the integers strictly between $2\times4^{n+1}-2$ and $2\times4^{n+1}+4$, we need to find a multiple of $3$ to obtain a dyadic strictly between the Left option and the Right option. The only multiple of $3$ under these conditions is $2\times 4^{n+1}+1$. Thus, the desired simplest number is $H_{2n+3}=\frac{2\times 4^{n+1}+1}{12\times 4^{n}}$.
        \item From Lemma \ref{lem:facts}, we have that
\begin{align*}
\{H_{2n}\,|\,H_{2n+3}\} &= \left\{\frac{4^n-1}{6\times 4^{n-1}}\,\middle|\,\frac{2\times4^{n+1}+1}{3\times 4^{n+1}}\right\} \\
&= \left\{\frac{2\times 4^{n+1}-8}{12\times 4^{n}}\,\middle|\,\frac{2\times 4^{n+1}+1}{12\times 4^{n}}\right\}.
\end{align*}
From the integers strictly between $2\times 4^{n+1}-8$ and $2\times 4^{n+1}+1$, we need to find a multiple of $3$ to obtain a dyadic strictly between the Left option and the Right option. In this case, there is more than one multiple of $3$, but of these, only $2\times 4^{n+1}-2$ is a multiple of $6$. Thus, the desired simplest number is $H_{2n+2}=\frac{2\times 4^{n+1}-2}{12\times 4^{n}}$.
    \end{itemize}
\end{proof}

The following theorem constitutes the main result of this document.

\begin{theorem}[Game Values for $(n,m)$]\label{thm:main}
For nonnegative integers $n$ and $m$, the following results hold:
\begin{enumerate}
\item If either $n$ or $m$ (or both) is even, then $(n,m) = H_{n+m}$.
\item If both $n$ and $m$ are odd, then $(n,m) = H_{n+m-2}$.
\end{enumerate}
\end{theorem}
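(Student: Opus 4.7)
The plan is to proceed by strong induction on $n+m$, combining Theorem \ref{thm:numbers} (so every option value is a number) with Theorem \ref{thm:patterns} (to identify dominating options) and Lemma \ref{lem:facts2} (which packages three instances of the Simplicity Rule directly in terms of $H_r$). The base cases are immediate: $(0,0)=0=H_0$, and whenever $n=0$ or $m=0$ Theorem \ref{thm:isomorphic} already gives $(n,m)=H_{n+m}$ in agreement with Case 1.

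For the inductive step with $n,m\ge 1$ I would first enumerate Left and Right options. A vertical cut of $(n,m)$ leaves a board $(k',m)$ for some $k'\in\{0,\ldots,n-1\}$, and is available to Left when $k'\equiv m\pmod 2$ (so that the discarded top square is blue) and to Right otherwise; the horizontal cuts are described symmetrically. A short parity computation shows that every Left option of $(n,m)$ has a value $H_r$ with $r$ even while every Right option has a value $H_r$ with $r$ odd. Since by Lemma \ref{lem:facts} the subsequence $\{H_{2k}\}$ is strictly increasing and $\{H_{2k+1}\}$ is strictly decreasing, the maximum Left value is $H_r$ for the largest available even $r$, and the minimum Right value is $H_r$ for the largest available odd $r$; the required dominance among options is exactly what is asserted by Theorem \ref{thm:patterns}.

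The four parity combinations of $(n,m)$ then collapse into three patterns. In Case A (both $n,m$ even) the extremal Left and Right options are $H_{n+m-2}$ and $H_{n+m-1}$, and the first item of Lemma \ref{lem:facts2} yields $(n,m)=H_{n+m}$. In Cases B and C (exactly one of $n,m$ odd, symmetric under swapping rows and columns) the parities of the extremal indices switch, giving $H_{n+m-1}$ on the Left and $H_{n+m-2}$ on the Right, and the second item of Lemma \ref{lem:facts2} again yields $(n,m)=H_{n+m}$. In Case D (both $n$ and $m$ odd) every Left option $(k',m)$ or $(n,j')$ has both coordinates odd, so the induction hypothesis returns $H_{k'+m-2}$ or $H_{n+j'-2}$ (the $-2$ shift of Case 2 of the statement), forcing the largest Left even index down to $n+m-4$; the smallest Right odd index is still $n+m-1$, and the third item of Lemma \ref{lem:facts2} gives $(n,m)=H_{n+m-2}$.

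The main obstacle is precisely the ``gap of four'' in Case D together with the degenerate subcases $n=1$ or $m=1$ within that case, which must be verified separately. For $(1,1)$ Left has no legal cut at all, and $(1,1)=\{\,\mid H_1\}=0=H_0$ by the Simplicity Rule. For $(n,1)$ with $n\ge 3$ odd the horizontal Left family is empty but the vertical family still realises the even index $n-3$, so $(n,1)=\{H_{n-3}\mid H_n\}=H_{n-1}$ by the third item of Lemma \ref{lem:facts2}, matching $H_{n+m-2}$. Up to this extra checking, the induction closes in each case by a single appeal to the appropriate line of Lemma \ref{lem:facts2}.
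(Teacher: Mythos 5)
Your proposal is correct and follows essentially the same route as the paper: induction on $n+m$ with base case from Theorem \ref{thm:isomorphic}, reduction to the extremal Left and Right options via Theorem \ref{thm:patterns} and Domination, evaluation of those options by the induction hypothesis, and a final appeal to the three items of Lemma \ref{lem:facts2}, with the same three parity cases and the same special handling of $(1,1)$. (One small wording slip: in Case D you want the \emph{largest} available odd index $n+m-1$, which by the decreasing monotonicity of $H_{2k+1}$ gives the minimal Right value, exactly as you stated earlier in the argument.)
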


\begin{observation}
In this theorem, both $(n, m)$ and $H_{n+m}$, as well as $H_{n+m-2}$, represent game positions, and ``$=$'' denotes the game equivalence.
\end{observation}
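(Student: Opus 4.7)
The plan is to proceed by strong induction on $n+m$, splitting into three cases according to the parities of $n$ and $m$. When at least one of $n,m$ is even, the diagonal equivalences already established in Theorem~\ref{thm:patterns} collapse the position to a one-column chocolate bar, which is handled directly by Theorem~\ref{thm:isomorphic}; no induction is needed in these two cases. The substantive work is entirely in the remaining case, where both $n$ and $m$ are odd.

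Explicitly, if $n+m$ is odd, Theorem~\ref{thm:patterns}(iiia) gives $(n,m)=(0,n+m)$, so $(n,m)=H_{n+m}$ by Theorem~\ref{thm:isomorphic}. If both $n$ and $m$ are even, then $n+m$ is even and $|n-0|=n$ is even, so Theorem~\ref{thm:patterns}(iiib) again gives $(n,m)=(0,n+m)=H_{n+m}$. If both $n$ and $m$ are odd, then $|n-1|$ is even and part (iiib) instead gives $(n,m)=(1,n+m-1)$, reducing the claim to showing $(1,N-1)=H_{N-2}$ for every even $N\geq 2$.

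I would treat the minimal case $N=2$ by hand: in $(1,1)$ Left has no legal move (both the vertical and the horizontal parity conditions fail), while Right's two options $(0,1)$ and $(1,0)$ both equal $H_1=1$ by Theorem~\ref{thm:isomorphic}, so $(1,1)=\{\mid 1\}=0=H_0$ by the Simplicity Rule. For $N\geq 4$ I would enumerate the options of $(1,N-1)$. Left's legal cuts are horizontal and yield $(1,2j-1)$ for $j=1,\ldots,N/2-1$; Right's legal cuts yield the vertical option $(0,N-1)$ together with the horizontal options $(1,2j)$ for $j=0,1,\ldots,(N-2)/2$. By the inductive hypothesis applied to the both-odd sub-case, $(1,2j-1)=H_{2j-2}$; by the already-settled mixed-parity case, $(1,2j)=H_{2j+1}$; and $(0,N-1)=H_{N-1}$ by Theorem~\ref{thm:isomorphic}. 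The monotonicities in Lemma~\ref{lem:facts} ($H_{2k}$ strictly increasing, $H_{2k+1}$ strictly decreasing) combine with Theorem~\ref{thm:domination} to eliminate every Left option except $H_{N-4}$ and every Right option except $H_{N-1}$. The third identity of Lemma~\ref{lem:facts2} then gives $(1,N-1)=\{H_{N-4}\mid H_{N-1}\}=H_{N-2}$, closing the induction.

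The main obstacle is the parity bookkeeping in the both-odd case: one must verify which cuts are legal for each player, check that every option has strictly smaller $n+m$ so the inductive hypothesis applies, and recognize that the surviving post-domination pair exactly matches the form required by Lemma~\ref{lem:facts2} (third item). Once that alignment is noticed, Theorem~\ref{thm:numbers} (which justifies classical domination on numerical values) and the Simplicity Rule do the rest of the work.
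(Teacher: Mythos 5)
Your argument is correct, but it takes a genuinely different route from the paper's. (The statement quoted is only a clarifying observation; I am comparing your proposal against the paper's proof of Theorem~\ref{thm:main}, which is what you have actually proved.) The paper runs a single induction over all $(n,m)$: in each of the three parity classes it uses Theorem~\ref{thm:patterns} plus Domination to prune the options down to the four nearest positions $(n-1,m)$, $(n,m-1)$, $(n-2,m)$, $(n,m-2)$, evaluates those by induction, and then invokes one item of Lemma~\ref{lem:facts2} per class. You instead use the diagonal equivalences (iiia)/(iiib) \emph{first}, collapsing every position to a strip: the mixed-parity and both-even cases become $(0,n+m)$ and are dispatched outright by Theorem~\ref{thm:isomorphic} with no induction and no appeal to Lemma~\ref{lem:facts2} at all, while the both-odd case reduces to $(1,N-1)$, which you handle by full option enumeration, the monotonicity facts of Lemma~\ref{lem:facts}, Domination, and only the third identity of Lemma~\ref{lem:facts2}. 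Your parity bookkeeping for the legal cuts in $(1,N-1)$ is right (Left has no vertical cut since the top of column $1$ is red; the surviving post-domination pair is $\{H_{N-4}\mid H_{N-1}\}$), and the base case $(1,1)=\{\,\mid 1\}=0=H_0$ checks out. What your approach buys is a shorter case analysis --- two of the three parity classes are free --- and it makes the ``everything is really a {\sc hackenbush} string'' picture explicit; what it costs is a heavier reliance on the full strength of Theorem~\ref{thm:patterns} (iiia)/(iiib) up front and an option enumeration for the two-column strip that the paper avoids by dominating down to nearest neighbours. Both arguments need Theorem~\ref{thm:numbers} (or the fact that all options are numbers) to justify replacing options by values and applying Domination, which you correctly note.
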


\begin{proof}
If either $n=0$ or $m=0$, the result follows as a consequence of Theorem \ref{thm:isomorphic}.

Suppose now that both $n$ and $m$ are even integers. By Theorem \ref{thm:patterns} and Domination, we only need to consider the Left options $(n-2,m)$ and\linebreak $(n,m-2).$ By induction, we can establish that both have identical game values, which are equal to $H_{n+m-2}$. On the other hand, also by Theorem \ref{thm:patterns} and Domination, we only need to consider the Right options $(n-1,m)$ and $(n,m-1).$ By induction, we can establish that both have identical game values, which are equal to $H_{n+m-1}$. Therefore, by Lemma \ref{lem:facts2} (first item), $$(n,m)=\{H_{n+m-2}\,|\,H_{n+m-1}\}=H_{n+m}.$$

Suppose that $n$ and $m$ have different parities. By Theorem \ref{thm:patterns} and Domination, we only need to consider the Left options $(n-1,m)$ and $(n,m-1).$ By induction, we can establish that one of the values of these options is $H_{n+m-1}$ and the other is $H_{n+m-3}$, depending on the analysis of the parities. As a result of Domination, $H_{n+m-3}$ is removed. On the other hand, also by Theorem \ref{thm:patterns} and Domination, we only need to consider the Right options $(n-2,m)$ and $(n,m-2).$ By induction, we can establish that both have identical game values, which are equal to $H_{n+m-2}$. Therefore, by Lemma \ref{lem:facts2} (second item), $$(n,m)=\{H_{n+m-1}\,|\,H_{n+m-2}\}=H_{n+m}.$$

Finally, suppose that both $n$ and $m$ are odd integers. By Theorem \ref{thm:patterns} and Domination, we only need to consider the Left options $(n-2,m)$ and $(n,m-2).$ By induction, we can establish that both have identical game values, which are equal to $H_{n+m-4}$. On the other hand, also by Theorem~\ref{thm:patterns} and Domination, we only need to consider the Right options $(n-1,m)$ and $(n,m-1).$ By induction, we can establish that both have identical game values, which are equal to $H_{n+m-1}$. Therefore, by Lemma \ref{lem:facts2} (third item), $$(n,m)=\{H_{n+m-4}\,|\,H_{n+m-1}\}=H_{n+m-2}.$$
It is important noting that there is a configuration where the Left player has no options, which corresponds to the position $(1,1)$. In this case, since it is a $\mathcal{P}$-position, the game value is zero, consistent with the statement.
\end{proof}

Table \ref{tab:table} illustrates the pattern regarding the game values of the {\sc partisan chocolate game} positions.

\begin{table}[!htbp]
\begin{center}
\begin{tabular}{|c|c|c|c|c|c|c|c|c|c|}
 \hline
\rule{0pt}{3ex}\bm{\textcolor[rgb]{0.00,0.07,1.00}{\raisebox{0.5ex}{$\frac{171}{256}$}}}&
\rule{0pt}{3ex}\bm{\textcolor[rgb]{1.00,0.00,0.00}{\raisebox{0.5ex}{$\frac{85}{128}$}}} &
\rule{0pt}{3ex}\bm{\textcolor[rgb]{0.00,0.07,1.00}{\raisebox{0.5ex}{$\frac{683}{1024}$}}} &
\rule{0pt}{3ex}\bm{\textcolor[rgb]{1.00,0.00,0.00}{\raisebox{0.5ex}{$\frac{341}{512}$}}} &
\rule{0pt}{3ex}\bm{\textcolor[rgb]{0.00,0.07,1.00}{\raisebox{0.5ex}{$\frac{2731}{4096}$}}} &
\rule{0pt}{3ex}\bm{\textcolor[rgb]{1.00,0.00,0.00}{\raisebox{0.5ex}{$\frac{1365}{2048}$}}}&
\rule{0pt}{3ex}\bm{\textcolor[rgb]{0.00,0.07,1.00}{\raisebox{0.5ex}{$\frac{10923}{16384}$}}} &
\rule{0pt}{3ex}\bm{\textcolor[rgb]{1.00,0.00,0.00}{\raisebox{0.5ex}{$\frac{5461}{8192}$}}}&
\rule{0pt}{3ex}\bm{\textcolor[rgb]{0.00,0.07,1.00}{\raisebox{0.5ex}{$\frac{43691}{65536}$}}} &
\rule{0pt}{3ex}\bm{\textcolor[rgb]{1.00,0.00,0.00}{\raisebox{0.5ex}{$\frac{21845}{32768}$}}} \\
  \hline
\rule{0pt}{3ex}\bm{\textcolor[rgb]{1.00,0.00,0.00}{\raisebox{0.5ex}{$\frac{85}{128}$}}}&
\rule{0pt}{3ex}\bm{\textcolor[rgb]{0.00,0.07,1.00}{\raisebox{0.5ex}{$\frac{171}{256}$}}} &
\rule{0pt}{3ex}\bm{\textcolor[rgb]{1.00,0.00,0.00}{\raisebox{0.5ex}{$\frac{341}{512}$}}} &
\rule{0pt}{3ex}\bm{\textcolor[rgb]{0.00,0.07,1.00}{\raisebox{0.5ex}{$\frac{683}{1024}$}}} &
\rule{0pt}{3ex}\bm{\textcolor[rgb]{1.00,0.00,0.00}{\raisebox{0.5ex}{$\frac{1365}{2048}$}}} &
\rule{0pt}{3ex}\bm{\textcolor[rgb]{0.00,0.07,1.00}{\raisebox{0.5ex}{$\frac{2731}{4096}$}}} &
\rule{0pt}{3ex}\bm{\textcolor[rgb]{1.00,0.00,0.00}{\raisebox{0.5ex}{$\frac{5461}{8192}$}}} &
\rule{0pt}{3ex}\bm{\textcolor[rgb]{0.00,0.07,1.00}{\raisebox{0.5ex}{$\frac{10923}{16384}$}}}  &
\rule{0pt}{3ex}\bm{\textcolor[rgb]{1.00,0.00,0.00}{\raisebox{0.5ex}{$\frac{21845}{32768}$}}} &
\rule{0pt}{3ex}\bm{\textcolor[rgb]{0.00,0.07,1.00}{\raisebox{0.5ex}{$\frac{43691}{65536}$}}} \\
 \hline
\rule{0pt}{3ex}\bm{\textcolor[rgb]{0.00,0.07,1.00}{\raisebox{0.5ex}{$\frac{43}{64}$}}}&
\rule{0pt}{3ex}\bm{\textcolor[rgb]{1.00,0.00,0.00}{\raisebox{0.5ex}{$\frac{21}{32}$}}} &
\rule{0pt}{3ex}\bm{\textcolor[rgb]{0.00,0.07,1.00}{\raisebox{0.5ex}{$\frac{171}{256}$}}} &
\rule{0pt}{3ex}\bm{\textcolor[rgb]{1.00,0.00,0.00}{\raisebox{0.5ex}{$\frac{85}{128}$}}} &
\rule{0pt}{3ex}\bm{\textcolor[rgb]{0.00,0.07,1.00}{\raisebox{0.5ex}{$\frac{683}{1024}$}}} &
\rule{0pt}{3ex}\bm{\textcolor[rgb]{1.00,0.00,0.00}{\raisebox{0.5ex}{$\frac{341}{512}$}}} &
\rule{0pt}{3ex}\bm{\textcolor[rgb]{0.00,0.07,1.00}{\raisebox{0.5ex}{$\frac{2731}{4096}$}}} &
\rule{0pt}{3ex}\bm{\textcolor[rgb]{1.00,0.00,0.00}{\raisebox{0.5ex}{$\frac{1365}{2048}$}}} &
\rule{0pt}{3ex}\bm{\textcolor[rgb]{0.00,0.07,1.00}{\raisebox{0.5ex}{$\frac{10923}{16384}$}}} &
\rule{0pt}{3ex}\bm{\textcolor[rgb]{1.00,0.00,0.00}{\raisebox{0.5ex}{$\frac{5461}{8192}$}}} \\
  \hline
\rule{0pt}{3ex}\bm{\textcolor[rgb]{1.00,0.00,0.00}{\raisebox{0.5ex}{$\frac{21}{32}$}}} &
\rule{0pt}{3ex}\bm{\textcolor[rgb]{0.00,0.07,1.00}{\raisebox{0.5ex}{$\frac{43}{64}$}}} &
\rule{0pt}{3ex}\bm{\textcolor[rgb]{1.00,0.00,0.00}{\raisebox{0.5ex}{$\frac{85}{128}$}}} &
\rule{0pt}{3ex}\bm{\textcolor[rgb]{0.00,0.07,1.00}{\raisebox{0.5ex}{$\frac{171}{256}$}}} &
\rule{0pt}{3ex}\bm{\textcolor[rgb]{1.00,0.00,0.00}{\raisebox{0.5ex}{$\frac{341}{512}$}}} &
\rule{0pt}{3ex}\bm{\textcolor[rgb]{0.00,0.07,1.00}{\raisebox{0.5ex}{$\frac{683}{1024}$}}} &
\rule{0pt}{3ex}\bm{\textcolor[rgb]{1.00,0.00,0.00}{\raisebox{0.5ex}{$\frac{1365}{2048}$}}} &
\rule{0pt}{3ex}\bm{\textcolor[rgb]{0.00,0.07,1.00}{\raisebox{0.5ex}{$\frac{2731}{4096}$}}}  &
\rule{0pt}{3ex}\bm{\textcolor[rgb]{1.00,0.00,0.00}{\raisebox{0.5ex}{$\frac{5461}{8192}$}}} &
\rule{0pt}{3ex}\bm{\textcolor[rgb]{0.00,0.07,1.00}{\raisebox{0.5ex}{$\frac{10923}{16384}$}}} \\
 \hline
\rule{0pt}{3ex}\bm{\textcolor[rgb]{0.00,0.07,1.00}{\raisebox{0.5ex}{$\frac{11}{16}$}}} &
\rule{0pt}{3ex}\bm{\textcolor[rgb]{1.00,0.00,0.00}{\raisebox{0.5ex}{$\frac{5}{8}$}}} &
\rule{0pt}{3ex}\bm{\textcolor[rgb]{0.00,0.07,1.00}{\raisebox{0.5ex}{$\frac{43}{64}$}}} &
\rule{0pt}{3ex}\bm{\textcolor[rgb]{1.00,0.00,0.00}{\raisebox{0.5ex}{$\frac{21}{32}$}}} &
\rule{0pt}{3ex}\bm{\textcolor[rgb]{0.00,0.07,1.00}{\raisebox{0.5ex}{$\frac{171}{256}$}}} &
\rule{0pt}{3ex}\bm{\textcolor[rgb]{1.00,0.00,0.00}{\raisebox{0.5ex}{$\frac{85}{128}$}}} &
\rule{0pt}{3ex}\bm{\textcolor[rgb]{0.00,0.07,1.00}{\raisebox{0.5ex}{$\frac{683}{1024}$}}} &
\rule{0pt}{3ex}\bm{\textcolor[rgb]{1.00,0.00,0.00}{\raisebox{0.5ex}{$\frac{341}{512}$}}} &
\rule{0pt}{3ex}\bm{\textcolor[rgb]{0.00,0.07,1.00}{\raisebox{0.5ex}{$\frac{2731}{4096}$}}} &
\rule{0pt}{3ex}\bm{\textcolor[rgb]{1.00,0.00,0.00}{\raisebox{0.5ex}{$\frac{1365}{2048}$}}} \\
  \hline
\rule{0pt}{3ex}\bm{\textcolor[rgb]{1.00,0.00,0.00}{\raisebox{0.5ex}{$\frac{5}{8}$}}} &
\rule{0pt}{3ex}\bm{\textcolor[rgb]{0.00,0.07,1.00}{\raisebox{0.5ex}{$\frac{11}{16}$}}} &
\rule{0pt}{3ex}\bm{\textcolor[rgb]{1.00,0.00,0.00}{\raisebox{0.5ex}{$\frac{21}{32}$}}} &
\rule{0pt}{3ex}\bm{\textcolor[rgb]{0.00,0.07,1.00}{\raisebox{0.5ex}{$\frac{43}{64}$}}} &
\rule{0pt}{3ex}\bm{\textcolor[rgb]{1.00,0.00,0.00}{\raisebox{0.5ex}{$\frac{85}{128}$}}} &
\rule{0pt}{3ex}\bm{\textcolor[rgb]{0.00,0.07,1.00}{\raisebox{0.5ex}{$\frac{171}{256}$}}} &
\rule{0pt}{3ex}\bm{\textcolor[rgb]{1.00,0.00,0.00}{\raisebox{0.5ex}{$\frac{341}{512}$}}} &
\rule{0pt}{3ex}\bm{\textcolor[rgb]{0.00,0.07,1.00}{\raisebox{0.5ex}{$\frac{683}{1024}$}}} &
\rule{0pt}{3ex}\bm{\textcolor[rgb]{1.00,0.00,0.00}{\raisebox{0.5ex}{$\frac{1365}{2048}$}}} &
\rule{0pt}{3ex}\bm{\textcolor[rgb]{0.00,0.07,1.00}{\raisebox{0.5ex}{$\frac{2731}{4096}$}}} \\
  \hline
\rule{0pt}{3ex}\bm{\textcolor[rgb]{0.00,0.07,1.00}{\raisebox{0.5ex}{$\frac{3}{4}$}}} &
\rule{0pt}{3ex}\bm{\textcolor[rgb]{1.00,0.00,0.00}{\raisebox{0.5ex}{$\frac{1}{2}$}}} &
\rule{0pt}{3ex}\bm{\textcolor[rgb]{0.00,0.07,1.00}{\raisebox{0.5ex}{$\frac{11}{16}$}}} &
\rule{0pt}{3ex}\bm{\textcolor[rgb]{1.00,0.00,0.00}{\raisebox{0.5ex}{$\frac{5}{8}$}}} &
\rule{0pt}{3ex}\bm{\textcolor[rgb]{0.00,0.07,1.00}{\raisebox{0.5ex}{$\frac{43}{64}$}}} &
\rule{0pt}{3ex}\bm{\textcolor[rgb]{1.00,0.00,0.00}{\raisebox{0.5ex}{$\frac{21}{32}$}}} &
\rule{0pt}{3ex}\bm{\textcolor[rgb]{0.00,0.07,1.00}{\raisebox{0.5ex}{$\frac{171}{256}$}}} &
\rule{0pt}{3ex}\bm{\textcolor[rgb]{1.00,0.00,0.00}{\raisebox{0.5ex}{$\frac{85}{128}$}}} &
\rule{0pt}{3ex}\bm{\textcolor[rgb]{0.00,0.07,1.00}{\raisebox{0.5ex}{$\frac{683}{1024}$}}} &
\rule{0pt}{3ex}\bm{\textcolor[rgb]{1.00,0.00,0.00}{\raisebox{0.5ex}{$\frac{341}{512}$}}} \\
  \hline
\rule{0pt}{3ex}\bm{\textcolor[rgb]{1.00,0.00,0.00}{\raisebox{0.5ex}{$\frac{1}{2}$}}} &
\rule{0pt}{3ex}\bm{\textcolor[rgb]{0.00,0.07,1.00}{\raisebox{0.5ex}{$\frac{3}{4}$}}} &
\rule{0pt}{3ex}\bm{\textcolor[rgb]{1.00,0.00,0.00}{\raisebox{0.5ex}{$\frac{5}{8}$}}} &
\rule{0pt}{3ex}\bm{\textcolor[rgb]{0.00,0.07,1.00}{\raisebox{0.5ex}{$\frac{11}{16}$}}} &
\rule{0pt}{3ex}\bm{\textcolor[rgb]{1.00,0.00,0.00}{\raisebox{0.5ex}{$\frac{21}{32}$}}} &
\rule{0pt}{3ex}\bm{\textcolor[rgb]{0.00,0.07,1.00}{\raisebox{0.5ex}{$\frac{43}{64}$}}} &
\rule{0pt}{3ex}\bm{\textcolor[rgb]{1.00,0.00,0.00}{\raisebox{0.5ex}{$\frac{85}{128}$}}} &
\rule{0pt}{3ex}\bm{\textcolor[rgb]{0.00,0.07,1.00}{\raisebox{0.5ex}{$\frac{171}{256}$}}} &
\rule{0pt}{3ex}\bm{\textcolor[rgb]{1.00,0.00,0.00}{\raisebox{0.5ex}{$\frac{341}{512}$}}} &
\rule{0pt}{3ex}\bm{\textcolor[rgb]{0.00,0.07,1.00}{\raisebox{0.5ex}{$\frac{683}{1024}$}}} \\
  \hline
\rule{0pt}{3ex}\bm{\textcolor[rgb]{0.00,0.07,1.00}{$1$}} &
\rule{0pt}{3ex}\bm{\textcolor[rgb]{1.00,0.00,0.00}{$0$}} &
\rule{0pt}{3ex}\bm{\textcolor[rgb]{0.00,0.07,1.00}{\raisebox{0.5ex}{$\frac{3}{4}$}}} &
\rule{0pt}{3ex}\bm{\textcolor[rgb]{1.00,0.00,0.00}{\raisebox{0.5ex}{$\frac{1}{2}$}}} &
\rule{0pt}{3ex}\bm{\textcolor[rgb]{0.00,0.07,1.00}{\raisebox{0.5ex}{$\frac{11}{16}$}}} &
\rule{0pt}{3ex}\bm{\textcolor[rgb]{1.00,0.00,0.00}{\raisebox{0.5ex}{$\frac{5}{8}$}}} &
\rule{0pt}{3ex}\bm{\textcolor[rgb]{0.00,0.07,1.00}{\raisebox{0.5ex}{$\frac{43}{64}$}}} &
\rule{0pt}{3ex}\bm{\textcolor[rgb]{1.00,0.00,0.00}{\raisebox{0.5ex}{$\frac{21}{32}$}}} &
\rule{0pt}{3ex}\bm{\textcolor[rgb]{0.00,0.07,1.00}{\raisebox{0.5ex}{$\frac{171}{256}$}}} &
\rule{0pt}{3ex}\bm{\textcolor[rgb]{1.00,0.00,0.00}{\raisebox{0.5ex}{$\frac{85}{128}$}}} \\
\hline
\rule{0pt}{3ex}\bm{$0$} &
\rule{0pt}{3ex}\bm{\textcolor[rgb]{0.00,0.07,1.00}{$1$}} &
\rule{0pt}{3ex}\bm{\textcolor[rgb]{1.00,0.00,0.00}{\raisebox{0.5ex}{$\frac{1}{2}$}}} &
\rule{0pt}{3ex}\bm{\textcolor[rgb]{0.00,0.07,1.00}{\raisebox{0.5ex}{$\frac{3}{4}$}}} &
\rule{0pt}{3ex}\bm{\textcolor[rgb]{1.00,0.00,0.00}{\raisebox{0.5ex}{$\frac{5}{8}$}}} &
\rule{0pt}{3ex}\bm{\textcolor[rgb]{0.00,0.07,1.00}{\raisebox{0.5ex}{$\frac{11}{16}$}}} &
\rule{0pt}{3ex}\bm{\textcolor[rgb]{1.00,0.00,0.00}{\raisebox{0.5ex}{$\frac{21}{32}$}}} &
\rule{0pt}{3ex}\bm{\textcolor[rgb]{0.00,0.07,1.00}{\raisebox{0.5ex}{$\frac{43}{64}$}}} &
\rule{0pt}{3ex}\bm{\textcolor[rgb]{1.00,0.00,0.00}{\raisebox{0.5ex}{$\frac{85}{128}$}}} &
\rule{0pt}{3ex}\bm{\textcolor[rgb]{0.00,0.07,1.00}{\raisebox{0.5ex}{$\frac{171}{256}$}}} \\
\hline
\end{tabular}
\caption{Game values of the {\sc partisan chocolate game} positions.}
\label{tab:table}
\end{center}
\end{table}

\section{Game practice}
\label{sec:practice}

This section serves to briefly show how Theorem \ref{thm:main} can be applied in game practice. As an example, Figure \ref{fig:fig8} exhibits the disjunctive sum\linebreak $-(2,4)-(1,3)+(2,3)+(2,0)$. Since the squares adjacent to the poisoned square are red, the first two components have negative game values. By Theorem \ref{thm:main}, the sum is equal to $-\frac{21}{32}-\frac{1}{2}+\frac{11}{16}+\frac{1}{2}=\frac{1}{32}>0$ and, consequently, Left has a winning move.

\begin{figure}[htb!]
\begin{center}
\scalebox{0.4}{
\begin{tikzpicture}
\clip(0.8,0.8) rectangle (15.2,6.2);
\fill[line width=0.pt,fill=black,fill opacity=1.0] (1.,1.) -- (2.,1.) -- (2.,2.) -- (1.,2.) -- cycle;
\fill[line width=0.pt,fill=black,fill opacity=1.0] (5.,1.) -- (6.,1.) -- (6.,2.) -- (5.,2.) -- cycle;
\fill[line width=0.pt,fill=black,fill opacity=1.0] (12.,1.) -- (13.,1.) -- (13.,2.) -- (12.,2.) -- cycle;
\fill[line width=0.pt,fill=black,fill opacity=1.0] (9.,2.) -- (8.,2.) -- (8.,1.) -- (9.,1.) -- cycle;
\draw [line width=2.pt] (1.,1.)-- (1.,6.);
\draw [line width=2.pt] (1.,1.)-- (4.,1.);
\draw [line width=2.pt] (4.,1.)-- (4.,6.);
\draw [line width=2.pt] (1.,6.)-- (4.,6.);
\draw [line width=2.pt] (5.,1.)-- (5.,5.);
\draw [line width=2.pt] (5.,5.)-- (7.,5.);
\draw [line width=2.pt] (5.,1.)-- (7.,1.);
\draw [line width=2.pt] (7.,1.)-- (7.,5.);
\draw [line width=2.pt] (8.,1.)-- (8.,5.);
\draw [line width=2.pt] (12.,1.)-- (12.,2.);
\draw [line width=2.pt] (12.,2.)-- (15.,2.);
\draw [line width=2.pt] (15.,2.)-- (15.,1.);
\draw [line width=2.pt] (15.,1.)-- (12.,1.);
\draw [line width=2.pt] (1.,2.)-- (4.,2.);
\draw [line width=2.pt] (1.,3.)-- (4.,3.);
\draw [line width=2.pt] (1.,4.)-- (4.,4.);
\draw [line width=2.pt] (1.,5.)-- (4.,5.);
\draw [line width=2.pt] (2.,6.)-- (2.,1.);
\draw [line width=2.pt] (3.,6.)-- (3.,1.);
\draw [line width=2.pt] (6.,5.)-- (6.,1.);
\draw [line width=2.pt] (5.,4.)-- (7.,4.);
\draw [line width=2.pt] (5.,3.)-- (7.,3.);
\draw [line width=2.pt] (5.,2.)-- (7.,2.);
\draw [line width=2.pt] (13.,1.)-- (13.,2.);
\draw [line width=2.pt] (14.,1.)-- (14.,2.);
\draw [line width=2.pt] (9.,5.)-- (9.,1.);
\draw [line width=2.pt] (10.,1.)-- (10.,5.);
\draw [line width=2.pt] (11.,1.)-- (11.,5.);
\draw [line width=2.pt] (11.,5.)-- (8.,5.);
\draw [line width=2.pt] (8.,4.)-- (11.,4.);
\draw [line width=2.pt] (8.,3.)-- (11.,3.);
\draw [line width=2.pt] (8.,2.)-- (11.,2.);
\draw [line width=2.pt] (11.,1.)-- (8.,1.);

\draw (8.1-7,2.9+2) node[anchor=north west] {\scalebox{1.5}{\textcolor[rgb]{1.00,0.00,0.00}{\textbf{R}}}};
\draw (8.1-7,2.9+2-2) node[anchor=north west] {\scalebox{1.5}{\textcolor[rgb]{1.00,0.00,0.00}{\textbf{R}}}};
\draw (8.1-6,2.9+2-3) node[anchor=north west] {\scalebox{1.5}{\textcolor[rgb]{1.00,0.00,0.00}{\textbf{R}}}};
\draw (8.1-6,2.9+2-1) node[anchor=north west] {\scalebox{1.5}{\textcolor[rgb]{1.00,0.00,0.00}{\textbf{R}}}};
\draw (8.1-6,2.9+2+1) node[anchor=north west] {\scalebox{1.5}{\textcolor[rgb]{1.00,0.00,0.00}{\textbf{R}}}};
\draw (8.1-5,2.9+2) node[anchor=north west] {\scalebox{1.5}{\textcolor[rgb]{1.00,0.00,0.00}{\textbf{R}}}};
\draw (8.1-5,2.9) node[anchor=north west] {\scalebox{1.5}{\textcolor[rgb]{1.00,0.00,0.00}{\textbf{R}}}};
\draw (8.1-5,2.9-2) node[anchor=north west] {\scalebox{1.5}{\textcolor[rgb]{1.00,0.00,0.00}{\textbf{R}}}};
\draw (8.1-3,2.9) node[anchor=north west] {\scalebox{1.5}{\textcolor[rgb]{1.00,0.00,0.00}{\textbf{R}}}};
\draw (8.1-3,2.9+2) node[anchor=north west] {\scalebox{1.5}{\textcolor[rgb]{1.00,0.00,0.00}{\textbf{R}}}};
\draw (8.1-2,2.9+1) node[anchor=north west] {\scalebox{1.5}{\textcolor[rgb]{1.00,0.00,0.00}{\textbf{R}}}};
\draw (8.1-2,2.9-1) node[anchor=north west] {\scalebox{1.5}{\textcolor[rgb]{1.00,0.00,0.00}{\textbf{R}}}};
\draw (8.1+2,2.9-1) node[anchor=north west] {\scalebox{1.5}{\textcolor[rgb]{1.00,0.00,0.00}{\textbf{R}}}};
\draw (8.1+2,2.9+1) node[anchor=north west] {\scalebox{1.5}{\textcolor[rgb]{1.00,0.00,0.00}{\textbf{R}}}};
\draw (8.1+1,2.9+2) node[anchor=north west] {\scalebox{1.5}{\textcolor[rgb]{1.00,0.00,0.00}{\textbf{R}}}};
\draw (8.1+1,2.9) node[anchor=north west] {\scalebox{1.5}{\textcolor[rgb]{1.00,0.00,0.00}{\textbf{R}}}};
\draw (8.1,2.9+1) node[anchor=north west] {\scalebox{1.5}{\textcolor[rgb]{1.00,0.00,0.00}{\textbf{R}}}};
\draw (8.1+6,2.9-1) node[anchor=north west] {\scalebox{1.5}{\textcolor[rgb]{1.00,0.00,0.00}{\textbf{R}}}};

\draw (8.1-7,2.9+3) node[anchor=north west] {\scalebox{1.5}{\textcolor[rgb]{0.00,0.07,1.00}{\textbf{B}}}};
\draw (8.1-7,2.9+1) node[anchor=north west] {\scalebox{1.5}{\textcolor[rgb]{0.00,0.07,1.00}{\textbf{B}}}};
\draw (8.1-6,2.9) node[anchor=north west] {\scalebox{1.5}{\textcolor[rgb]{0.00,0.07,1.00}{\textbf{B}}}};
\draw (8.1-6,2.9+2) node[anchor=north west] {\scalebox{1.5}{\textcolor[rgb]{0.00,0.07,1.00}{\textbf{B}}}};
\draw (8.1-5,2.9+3) node[anchor=north west] {\scalebox{1.5}{\textcolor[rgb]{0.00,0.07,1.00}{\textbf{B}}}};
\draw (8.1-5,2.9+1) node[anchor=north west] {\scalebox{1.5}{\textcolor[rgb]{0.00,0.07,1.00}{\textbf{B}}}};
\draw (8.1-5,2.9-1) node[anchor=north west] {\scalebox{1.5}{\textcolor[rgb]{0.00,0.07,1.00}{\textbf{B}}}};
\draw (8.1-2,2.9) node[anchor=north west] {\scalebox{1.5}{\textcolor[rgb]{0.00,0.07,1.00}{\textbf{B}}}};
\draw (8.1-2,2.9+2) node[anchor=north west] {\scalebox{1.5}{\textcolor[rgb]{0.00,0.07,1.00}{\textbf{B}}}};
\draw (8.1-3,2.9+1) node[anchor=north west] {\scalebox{1.5}{\textcolor[rgb]{0.00,0.07,1.00}{\textbf{B}}}};
\draw (8.1+1,2.9+1) node[anchor=north west] {\scalebox{1.5}{\textcolor[rgb]{0.00,0.07,1.00}{\textbf{B}}}};
\draw (8.1+1,2.9-1) node[anchor=north west] {\scalebox{1.5}{\textcolor[rgb]{0.00,0.07,1.00}{\textbf{B}}}};
\draw (8.1+2,2.9) node[anchor=north west] {\scalebox{1.5}{\textcolor[rgb]{0.00,0.07,1.00}{\textbf{B}}}};
\draw (8.1+2,2.9+2) node[anchor=north west] {\scalebox{1.5}{\textcolor[rgb]{0.00,0.07,1.00}{\textbf{B}}}};
\draw (8.1,2.9+2) node[anchor=north west] {\scalebox{1.5}{\textcolor[rgb]{0.00,0.07,1.00}{\textbf{B}}}};
\draw (8.1,2.9) node[anchor=north west] {\scalebox{1.5}{\textcolor[rgb]{0.00,0.07,1.00}{\textbf{B}}}};
\draw (8.1+5,2.9-1) node[anchor=north west] {\scalebox{1.5}{\textcolor[rgb]{0.00,0.07,1.00}{\textbf{B}}}};
\end{tikzpicture}}
\caption{How can Left win the disjunctive sum $-(2,4)-(1,3)+(2,3)+(2,0)$?}
\label{fig:fig8}
\end{center}
\end{figure}
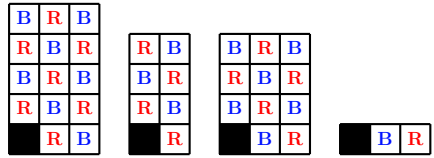

It is well-known in CGT that in these endgames, a player should choose a component with the highest possible power of $2$ in the denominator. In this case, in order to win, Left should make her move in the first component. She can do this by removing the rightmost column or the topmost row. Since Left is greedy, she will choose the column because it has more chocolate blocks.

{\sc partisan chocolate game}  has an interesting implementation on a traditional chessboard. The game is played with white and black rooks that move horizontally and vertically, just like in {\sc chess}. However, the rooks can only move to the left or down. If a rook is black, Left can only move the rook to black squares, while Right can only move it to white squares. If a rook is white, Left can only move it to white squares, and Right can only move it to black squares. The rooks do not conflict in any way; they can occupy the same squares and can jump over each other without blocking.

Figure \ref{fig:fig9} shows the disjunctive sum $-(2,4)-(1,3)+(2,3)+(2,0)$ once again, but this time with the chessboard implementation. Can the reader find a winning move for Left?

\begin{figure}[!htb]
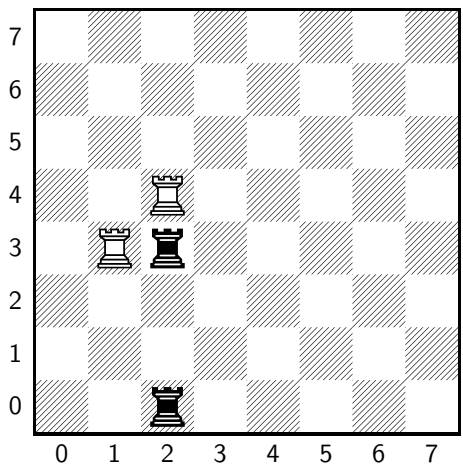

\begin{center}
\setchessboard{setpieces=\mylist}
\def\mylist{Rb4, Ra3, rb0, rb3}
\chessboard[zero,labelbottomformat=\arabic{filelabel},maxfield=g7,showmover=false]\\
\end{center}
\caption{The disjunctive sum $-(2,4)-(1,3)+(2,3)+(2,0)$ on a chessboard.}
\label{fig:fig9}
\end{figure}

\section{Final remarks}
\label{sec:final}

There are at least two natural future explorations to consider. One of them involves the analysis of the three-dimensional version of {\sc partisan chocolate game}, which seems to have a significant level of difficulty. Another one involves adding green chocolate blocks (mint flavor). In CGT, traditionally, green is the impartial color. Therefore, in this version, both players can cut the chocolate along a vertical line, provided that the top square of the column immediately to the right of that line is green, or they can cut the chocolate along a horizontal line, as long as the rightmost square in the row just above that line is green. Inevitably, the ruleset loses the property of all game values being numbers and also presents non-trivial open problems.

Part of the work presented here was developed at the \emph{Combinatorial Game Theory Colloquium IV} (Azores, Portugal, January 23--25, 2023), a renowned conference dedicated to CGT.

Additionally, the authors would like to express their gratitude to Dr. Ryohei Miyadera and Dr. Ko Sakai for their valuable suggestions.


\begin{thebibliography}{10}

\bibitem[Albert et al.(2007)]{ANW007} Albert, M., Nowakowski, R., Wolfe, D. \emph{Lessons in Play: An Introduction to Combinatorial Game Theory}, A. K. Peters, New York, 2007.

\bibitem[Berlekamp et al.(1982)]{BCG82} Berlekamp, E., Conway, J., Guy, R. \textit{Winning Ways}, Academic Press, London-New York, 1982.

\bibitem[Carvalho et al.(2021)]{Car21} Carvalho, A., Huggan, M., Nowakowski, R., Santos, C. ``A note on numbers'', {\sc integers}: The Electronic Journal of Combinatorial Number Theory, 21B, \#GA04, 2021.

\bibitem[Conway(1976)]{Con76} Conway, J. \emph{On Numbers and Games}, Academic Press, London-New
York, 1976.

\bibitem[Gale(1974)]{Gale74} Gale, D. ``A curious {\sc nim}-type game'', The American Mathematical Monthly, 81, 876--879, 1974.

\bibitem[Miyadera et al.(2015)]{MN015} Nakamura, S., Miyadera, R. ``Impartial chocolate bar games'', {\sc integers}: The Electronic Journal of Combinatorial Number Theory, 15, \#G4, 2015.

\bibitem[Miyadera et al.(2020)]{MNN020} Miyadera, R., Nakamura, S., Nakaya, Y. ``Grundy numbers of impartial chocolate bar games'', {\sc integers}: The Electronic Journal of Combinatorial Number Theory, 20, \#G1, 2020.

\bibitem[Miyadera et al.(2021)]{MN021} Miyadera, R., Nakaya, Y. ``Grundy numbers of impartial three-dimensional chocolate-bar games'', {\sc integers}: The Electronic Journal of Combinatorial Number Theory, 21B, \#GA19, 2021.


\bibitem[OEIS(A001045)]{OEIS} OEIS, The On-Line Encyclopedia of Integer Sequences, Sequence A001045, available at \url{https://oeis.org/A001045}.

\bibitem[Robin(1989)]{Rob89} Robin, C. ``A poisoned chocolate problem'', The Mathematical Gazette, 73(466), 341--343, 1989.

\bibitem[Schuh(1952)]{Sch52} Schuh, F. ``Spel van delers'', Nieuw Tijdschrift voor Wiskunde (New magazine on Mathematics), 39, 299--304, 1952.


\bibitem[Siegel(2013)]{Sie013} Siegel, A. \emph{Combinatorial Game Theory},  American Mathematical Society, Providence, Rhode Island, 2013.


\end{thebibliography}
\end{document}